\newtheorem{theorem}{Theorem}[section]
\numberwithin{equation}{section}
\newtheorem{lemma}[theorem]{Lemma}
\newtheorem{proposition}[theorem]{Proposition}
\newtheorem{corollary}[theorem]{Corollary}
\newtheorem{remark}[theorem]{Remark}
\numberwithin{equation}{section}
\def\N{\mathbb{N}}
\def\Z{\mathbb{Z}}
\def\S{\mathcal{S}}
\def\EE{\mathcal{E}}
\renewcommand{\phi}{\varphi}
\renewcommand{\epsilon}{\varepsilon}
\def\calE{{\mathcal E}}
\newcommand{\1}{{\text{\Large $\mathfrak 1$}}}
\renewcommand{\emptyset}{\varnothing}
\newcommand{\til}{\widetilde}
\newcommand{\pr}[1]{\mathbb{P}\!\left(#1\right)}
\newcommand{\E}[1]{\mathbb{E}\!\left[#1\right]}
\newcommand{\estart}[2]{\mathbb{E}_{#2}\!\left[#1\right]}
\newcommand{\prstart}[2]{\mathbb{P}_{#2}\!\left(#1\right)}
\newcommand{\prcond}[3]{\mathbb{P}_{#3}\!\left(#1\;\middle\vert\;#2\right)}
\newcommand{\econd}[2]{\mathbb{E}\!\left[#1\;\middle\vert\;#2\right]}
\newcommand{\estarth}[2]{\mathbb{\widehat{E}}_{#2}\!\left[#1\right]}
\newcommand{\econdh}[2]{\mathbb{\widehat{E}}\!\left[#1\;\middle\vert\;#2\right]}
\newcommand{\tv}[2]{\left\|#1-#2\right\|_\mathrm{TV}}
\newcommand{\norm}[1]{\left\| #1 \right\|}
\newcommand{\tn}{|\kern-.1em|\kern-0.1em|}
\newcommand\be{\begin{equation}}
\newcommand\ee{\end{equation}}
\def\eps{\varepsilon}
\newcommand{\tmix}[2]{t_{\mathrm{mix}}(\epsilon, #1, #2)}
\newcommand{\tx}[1]{t_{\mathrm{mix}}(\epsilon, #1)}
\newcommand{\cps}[1]{\mathcal{P}_{\eta_0}\!\left(#1\right)}
\newcommand{\tb}[1]{\textbf{\color{blue}{#1}}}
\newcommand{\dntorus}{\mathbb{T}^{d,n}}
\begin{document}

\title{Quenched exit times for  random walk on dynamical percolation}

\author{Yuval Peres\thanks{Microsoft Research, Redmond WA, U.S.A.\ \ Email:
        \hbox{peres@microsoft.com}} \and Perla Sousi\thanks{University of Cambridge, Cambridge, UK.\ \ Email: \hbox{p.sousi@statslab.cam.ac.uk}} \and
        Jeffrey E. Steif\thanks{Chalmers University of Technology
and Gothenburg University, Gothenburg, Sweden.\ \ Email:
        \hbox{steif@chalmers.se}}
}

\maketitle
\thispagestyle{empty}

\begin{abstract}
We consider random walk on dynamical percolation on the discrete torus $\Z_n^d$. In previous work, mixing times of this process for $p<p_c(\Z^d)$ were obtained in the annealed
setting where one averages over the dynamical percolation environment. Here
we study exit times in the {\em quenched} setting, where we condition on a typical dynamical percolation environment. We obtain an upper bound for all $p$ which for $p<p_c$ matches the known lower bound. 
\medskip\noindent
\newline
\emph{Keywords and phrases.} Dynamical percolation, random walk, hitting times, mixing times.
 \newline
 MSC 2010 \emph{subject classifications.}
 Primary 60K35, 60K37
  \medskip\noindent
\end{abstract}

\section{Introduction}

In this paper, we study quenched mixing results for random walk on dynamical percolation on the torus~$\Z_n^d$ with parameters $p$ and $\mu\le 1/2$. Let each edge evolve 
independently where an edge in state 0 (absent, closed) switches to 
state 1 (present, open) at rate $p\mu$ and an edge in state 1 switches to 
state 0 at rate $(1-p)\mu$.  Let $(\eta_t)_{t\ge 0}$ denote the 
resulting Markov process on $\{0,1\}^{E(\Z_n^d)}$ whose stationary distribution is product measure
with density $p$, denoted by $\pi_p$; this model is called {\em dynamical percolation}. 
We next perform a  random walk on the evolving graph $(\eta_t)_{t\ge 0}$
by having the random walker at rate 1 choose a neighbour (in the original graph) uniformly
at random and move there if (and only if) the connecting edge is open at that time. Letting
$(X_t)_{t\ge 0}$ denote the position of the walker at time $t$, we have, when initial configurations
are given, that
\[
(M_t)_{t\ge 0}:= ((\eta_t,X_t))_{t\ge 0}
\]
is a Markov process while $(X_t)_{t\ge 0}$ of course is not. 

In \cite{PerStaufSteif}, a number of annealed results were obtained for this model where
one has $d$ and $p$ fixed while $\mu$ and $n$ are considered the important parameters 
with respect to which we want to study the model.
We summarise here the relevant results obtained in~\cite{PerStaufSteif} concerning mixing time.

Even though mixing times are traditionally defined only for Markov chains, one can easily adapt the definition to cases like $X$ above. For $\epsilon\in (0,1)$ and $\eta_0$ a configuration of edges we write 
\[
\tx{\eta_0} = \min\left\{t\geq 0: \, \max_{x} \norm{\prstart{X_t=\cdot}{x,\eta_0} - \pi}_{\rm{TV}} \leq \epsilon   \right\}.     
\]
We write $\prstart{}{\pi_p}$, when the environment process starts from stationarity. We then write
\[
\tx{\pi_p} = \min\left\{t\geq 0: \, \max_{x} \norm{\prstart{X_t=\cdot}{x,\pi_p} - \pi}_{\rm{TV}} \leq \epsilon   \right\}.  
\]

The following describes the subcritical picture very well.

\begin{theorem}[\cite{PerStaufSteif}] \label{thm:subOLD} 
For any $d\ge 1$, $\epsilon>0$ and $p\in (0,p_c(\mathbb{Z}^{d}))$, there exists 
$C=C(d,\eps,p)\in (0,\infty)$ and $n_0=n_0(d,\epsilon,p)\in \N$ such that, for all $n\geq n_0$ and for all $\mu\le 1/2$, we have 
\[
\frac{n^2}{C \mu} \le \tx{\pi_p} \le \sup_{\eta_0}\tx{\eta_0} \le \frac{Cn^2}{\mu}.
\]
\end{theorem}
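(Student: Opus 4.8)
The middle inequality $\tx{\pi_p}\le\sup_{\eta_0}\tx{\eta_0}$ is immediate, since $\prstart{X_t=\cdot}{x,\pi_p}$ is a mixture over $\eta_0$ of the laws $\prstart{X_t=\cdot}{x,\eta_0}$ and total variation distance to $\pi$ is convex. The two outer bounds both reflect one heuristic, valid because $p<p_c(\Z^d)$: at every instant the walker sits in a cluster of $\eta_t$ of exponentially-tailed size, and the cluster carrying the walker is fully updated only on the time scale $1/\mu$, so on time scales $\gg1/\mu$ the walker behaves like a random walk on $\Z_n^d$ run at speed $\Theta(\mu)$, whose mixing time is $\Theta(n^2/\mu)$. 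The lower bound says the walker is at least this slow, the upper bound that it is at most this slow.

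\textbf{Lower bound on $\tx{\pi_p}$.} Lift the walk to $\Z^d$, write $\widetilde X$ for a lift of $X$, start from $X_0=0$ and $\eta_0\sim\pi_p$, and prove the diffusive second--moment bound
\[
\estart{|\widetilde X_t|^2}{0,\pi_p}\ \le\ C_1(d,p)\,(\mu t+1).
\]
Decompose time into epochs $0=\tau_0<\tau_1<\cdots$, where $\tau_{i+1}$ is the first time after $\tau_i$ at which the state of some edge incident to the cluster $\mathcal C_i:=\mathcal C(X_{\tau_i};\eta_{\tau_i})$ changes. On $[\tau_i,\tau_{i+1})$ the walker cannot leave $\mathcal C_i$, so the $i$-th epoch moves it by at most $|\mathcal C_i|$, while $\mathcal C_i$ has at most $2d|\mathcal C_i|$ incident edges, each changing state at rate at most $\mu$, so the epoch lasts at least $(2d\mu|\mathcal C_i|)^{-1}$ in expectation. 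Subcriticality enters through the exponential tail of subcritical cluster sizes, which should control the moments of $|\mathcal C_i|$ and hence make the expected squared displacement accumulate at rate $O(\mu)$. Two points need real care here: that the size of the walker's cluster is controlled uniformly over epochs (it is not literally the $\pi_p$-cluster of a fixed vertex, owing to the walk--environment correlation and to growth across successive updates), and that the per-epoch displacements combine diffusively rather than ballistically, which holds because the walker re-mixes within its small cluster between successive updates and so the coarse-grained increments carry no persistent drift. Granting the bound, take $t=n^2/(C\mu)$ with $C=C(d,\epsilon,p)$ large: then $X_t$ lies, with probability at least $1-\epsilon$, in a ball whose $\pi$-measure is bounded away from $1$, whence $\tv{\prstart{X_t=\cdot}{0,\pi_p}}{\pi}>\epsilon$ and $\tx{\pi_p}\ge n^2/(C\mu)$.

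\textbf{Upper bound on $\sup_{\eta_0}\tx{\eta_0}$.} Fix $\eta_0$ and $x$, and couple $(X_t,\eta_t)$ started from $(\eta_0,x)$ with $(X_t',\eta_t')$ started from $(\pi_p,\pi)$, which has $X_t'\sim\pi$ for all $t$ by stationarity of the pair chain; it then suffices to make $\mathbb{P}(X_t\ne X_t')\le\epsilon$ for $t=Cn^2/\mu$. Phase~1, of length $s:=C_2(d,\epsilon,p)\log n/\mu$: drive both environments by common edge clocks resampling to common Bernoulli$(p)$ values, so that, since each of the $O(n^d)$ edges is updated by time $s$ with probability at least $1-e^{-\mu s}$, the two environments coincide from time $s$ onwards with probability at least $1-\epsilon/2$. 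Phase~2, of length $Cn^2/\mu$: the two walkers now move on a single common dynamical environment, and I would couple them to coalesce by a coordinatewise reflection coupling (matching the coordinates successively, as for simple random walk on $\Z_n^d$) but analysed on the coarse time scale $1/\mu$: because clusters are $O(1)$, the coordinate differences of the two walkers perform, on that scale, essentially a simple random walk on $\Z_n$ slowed by $\mu$, and so are absorbed at $0$ within time $O(n^2/\mu)$; once the walkers occupy the same vertex they see the same local environment and can be kept together. This yields $\sup_{\eta_0}\tx{\eta_0}\le Cn^2/\mu$.

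\textbf{Main obstacle.} The hard part is Phase~2 of the upper bound --- making the ``$\mu$-slowed simple random walk'' picture precise enough to run a coupling. The obstruction is the edge-open constraint: a walker at the head of an open edge behaves as for simple random walk, but along a closed edge it does not move, so a matched coordinate can momentarily un-match and a reflected coordinate can fail to close in. One has to show these discrepancies are swept away because the walker is, at every time, trapped in an $O(1)$ cluster for a duration of order $1/\mu$, so that on the coarse scale the coordinate differences really do evolve like a random walk run at rate $\Theta(\mu)$; controlling this uniformly over the current configuration is exactly where $p<p_c(\Z^d)$ --- hence small, weakly correlated clusters --- is used. The same coarse-scale, small-cluster analysis also underlies the uniform cluster-size and diffusivity claims needed for the lower bound, so although that direction is conceptually simpler it is not entirely routine either.
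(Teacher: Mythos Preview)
This theorem is not proved in the present paper: it is quoted from~\cite{PerStaufSteif} as background (note the citation in the theorem header), and no proof or sketch appears here. There is therefore no ``paper's own proof'' to compare your proposal against.

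For what it is worth, your outline is broadly in the right spirit for the subcritical regime---the lower bound via a diffusive displacement estimate and the upper bound via a two-phase coupling are natural strategies---but since the argument lives in the cited reference rather than in this paper, I cannot assess fidelity to the original here. If you want to compare, you would need to consult~\cite{PerStaufSteif} directly; the methods there (in particular the ``good/bad time intervals'' decomposition and the regeneration structure used for the upper bound) differ in their technical implementation from the reflection-coupling sketch you propose for Phase~2, and the obstacle you flag---that a naive reflection coupling can un-match coordinates because of the edge-open constraint---is real and is handled there by a different mechanism than the one you suggest.
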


The following yields lower bounds throughout the whole parameter space of $p$.

\begin{theorem}[\cite{PerStaufSteif}]\label{thm:supOLD}
{\rm{(i)}} Given $d\ge 1$ and $\eps >0$, there exist $C_1=C_1(d,\eps)>0$ and $n_0=n_0(d,\epsilon)$
such that, for all $p$, for all $n\ge n_0$ and for all $\mu\le 1/2$, we have
\[
t_{\rm{mix}}(\eps,\pi_p) \geq C_1 n^2.
\]
{\rm{(ii)}} Given $d\ge 1$, $p$ and $\epsilon< 1-\theta_d(p)$, there exists $C_2=C_2(d,p,\eps)>0$ 
and $n_0=n_0(d,p,\epsilon)$ 
such that, for all $n\ge n_0$ and for all $\mu\le 1/2$, we have
\begin{equation}\label{eq:SecondLowerBoundInSuper}
\tx{\pi_p}\geq \frac{C_2}{\mu}.
\end{equation}
In particular, for $\epsilon< 1-\theta_d(p)$, we get a lower bound for
$\tx{\pi_p}$ of order $n^2 + \frac{1}{\mu}$.
\end{theorem}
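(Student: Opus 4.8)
The plan is to prove the two displayed bounds separately. For part (i) I would show that the annealed walk spreads only diffusively, so that at a time of order $n^2$ it is still confined to a sub-torus of linear size $o(n)$ and hence far from uniform; for part (ii) I would exploit that on time scales much shorter than $1/\mu$ the environment is essentially frozen at $\eta_0\sim\pi_p$, so that the walker stays in the percolation cluster of its starting point, which has at most $K$ vertices with $\pi_p$-probability at least $1-\theta_d(p)-o_K(1)$.

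For part (i), by vertex-transitivity of $\Z_n^d$ it suffices to bound $\norm{\prstart{X_t=\cdot}{0,\pi_p}-\pi}_{\mathrm{TV}}$ from below. The first step is the \emph{annealed diffusive estimate}
\[
\estart{\mathrm{dist}_{\Z_n}\!\big(X_t^{(i)},0\big)^2}{0,\pi_p}\ \le\ C(d)\,t\qquad(1\le i\le d),
\]
uniformly in $p,\mu,n$. I would deduce it from the fact that $M_t=(\eta_t,X_t)$ is reversible for $\pi_p\otimes\mathrm{Unif}$, so that the environment seen from the walker, $\widehat\eta_t=\tau_{-X_t}\eta_t$, is a stationary \emph{reversible} Markov process with marginal $\pi_p$: lifting the walk to $\Z^d$, the coordinate $X_t^{(i)}$ splits as a martingale $M_t^{(i)}$ with $\langle M^{(i)}\rangle_t\le t/d$ plus a drift $\int_0^t b_i(\widehat\eta_s)\,\ud s$, where $b_i(\widehat\eta)=\tfrac1{2d}\big(\widehat\eta(\{0,e_i\})-\widehat\eta(\{0,-e_i\})\big)$ has $\mathbb{E}_{\pi_p}[b_i]=0$. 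Reversibility makes all time-correlations of $b_i$ nonnegative, so $\mathbb{E}\big[\big(\int_0^t b_i(\widehat\eta_s)\,\ud s\big)^2\big]\le 2t\,\langle b_i,(-L)^{-1}b_i\rangle$, and it remains to check $\langle b_i,(-L)^{-1}b_i\rangle\le C(d)$ uniformly, i.e.\ that the walk has effective diffusivity bounded by a dimensional constant (for $p<p_c$ this is also contained in Theorem~\ref{thm:subOLD}). Granting the estimate, Chebyshev's inequality and a union bound over the $d$ coordinates show that at time $t=cn^2$ the walker lies in the box $B_\delta=\{z:\mathrm{dist}_{\Z_n}(z_i,0)\le\delta n\ \text{for all }i\}$ with probability at least $1-\tfrac{1-\eps}{3}$, while $\pi(B_\delta)\le(2\delta+1/n)^d$; choosing first $\delta=\delta(d,\eps)$ small and then $c=c(d,\eps)$ small, the total variation distance at time $cn^2$ exceeds $\tfrac{1+2\eps}{3}>\eps$, which gives $t_{\mathrm{mix}}(\eps,\pi_p)\ge C_1 n^2$ with $C_1=c$ for all $n\ge n_0(d,\eps)$.

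For part (ii), fix $p$ and $\eps<1-\theta_d(p)$ and choose $\delta>0$ with $\eps<1-\theta_d(p)-2\delta$. Since $\mathbb{P}_p(|\mathcal{C}(0)|>K)\downarrow\theta_d(p)$ on $\Z^d$ as $K\to\infty$, fix $K$ with $\mathbb{P}_p(|\mathcal{C}(0)|>K)\le\theta_d(p)+\delta/3$; as $\{|\mathcal{C}(x)|>K\}$ depends only on edges within distance $K$ of $x$, the same bound holds on $\Z_n^d$ once $n>2(K+1)$. Now run the process from $(\pi_p,x)$ up to time $t=c/\mu$. On the event $\{|\mathcal{C}_{\eta_0}(x)|\le K\}$ the cluster has at most $2dK$ boundary edges, all closed at time $0$ and each opening at rate $p\mu$, so conditionally on $\eta_0$ the probability that some boundary edge opens before time $t$ is at most $2dKp\mu t=2dKpc$. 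Taking $c=c(d,p,\eps)$ with $2dKpc\le\delta/3$, the walker therefore stays inside $\mathcal{C}_{\eta_0}(x)$, hence inside the ball $B(x,K)$, up to time $t$ with conditional probability at least $1-\delta/3$. Thus $\prstart{X_t\in B(x,K)}{x,\pi_p}\ge(1-\theta_d(p)-\delta/3)(1-\delta/3)\ge 1-\theta_d(p)-\delta$, whereas $\pi(B(x,K))\le(2K+1)^d/n^d\le\delta$ once $n\ge n_0(d,p,\eps)$, so $\norm{\prstart{X_t=\cdot}{x,\pi_p}-\pi}_{\mathrm{TV}}\ge 1-\theta_d(p)-2\delta>\eps$ and hence $\tx{\pi_p}>c/\mu$, giving $C_2=c$. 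Combining parts (i) and (ii) yields the claimed lower bound of order $n^2+1/\mu$.

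The step I expect to be the genuine obstacle is the uniform bound $\langle b_i,(-L)^{-1}b_i\rangle\le C(d)$ in part (i): the crude estimate $\langle b_i,b_i\rangle/\mathrm{gap}(\widehat L)$ is of order $p/\mu$ and is useless when $\mu$ is small, and one must instead use that $b_i$ is (close to) a coboundary for the shift part of the generator — because in a frozen environment the walker re-equilibrates within its typically finite cluster on an $O(1)$ time scale, over which the local drift averages out. Quantifying this, via a Kipnis--Varadhan-type decomposition or a comparison of Dirichlet forms with simple random walk, is the heart of the matter; by contrast the trapping estimate of part (ii) and the \emph{confinement-implies-non-mixing} step of part (i) are routine.
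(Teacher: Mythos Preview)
This theorem is not proved in the present paper; it is quoted from \cite{PerStaufSteif} as background, so there is no in-paper argument to compare against. That said, your plan is sound and, once completed, gives a self-contained proof. Part~(ii) is the standard trapping estimate and is correct as written.

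For part~(i), the Kipnis--Varadhan route via the environment viewed from the particle is valid, and $\widehat\eta_t$ is indeed reversible for $\pi_p$ (both the refresh and the shift pieces satisfy detailed balance). But the step you flag as the ``genuine obstacle'' --- the uniform bound $\langle b_i,(-\widehat L)^{-1}b_i\rangle\le C(d)$ --- is in fact a two-line computation, not a deep fact. Write $\widehat L=\widehat L_{\mathrm{env}}+\widehat L_{\mathrm{walk}}$ with both summands symmetric nonpositive; then the variational formula $\langle b_i,(-\widehat L)^{-1}b_i\rangle=\sup_f\,|\langle b_i,f\rangle|^2/\langle f,(-\widehat L)f\rangle$ immediately gives $\langle b_i,(-\widehat L)^{-1}b_i\rangle\le\langle b_i,(-\widehat L_{\mathrm{walk}})^{-1}b_i\rangle$. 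For the frozen part, the change of variables $\widehat\eta\mapsto\tau_{e_i}\widehat\eta$ (shift-invariance of $\pi_p$) turns $\langle b_i,f\rangle$ into
\[
\langle b_i,f\rangle=\frac{1}{2d}\int \widehat\eta(\{0,e_i\})\bigl(f(\widehat\eta)-f(\tau_{-e_i}\widehat\eta)\bigr)\,d\pi_p,
\]
and Cauchy--Schwarz against the $\pm e_i$ terms of the Dirichlet form gives $|\langle b_i,f\rangle|^2\le (p/d)\,\langle f,(-\widehat L_{\mathrm{walk}})f\rangle$ for every $f$. Hence $\langle b_i,(-\widehat L)^{-1}b_i\rangle\le p/d\le 1/d$, uniformly in $p,\mu,n$. (Equivalently: $b_i=\widehat L_{\mathrm{walk}}(x_i)$ is the exact discrete gradient of the coordinate function; although $x_i\notin L^2(\pi_p)$, its Dirichlet energy is $p/d$, which is all the variational principle needs.) This yields $\estart{(X_t^{(i)})^2}{0,\pi_p}\le 4t/d$ and finishes part~(i). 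Two minor remarks: the nonnegativity of time-correlations is true but is not what produces the upper bound $\le 2t\langle b_i,(-\widehat L)^{-1}b_i\rangle$ --- that follows from the spectral representation alone; and no separate treatment of the subcritical case or comparison with simple random walk is required.
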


{\em Remarks}
(i). In the usual theory of Markov chains, a lower bound on the $\eps$-mixing time for a fixed~$\eps$ would yield
a bound of a similar order on the  mixing time when $\epsilon=1/4$; this is however not the case here 
which is not a contradiction
since $(X_t)_{t\ge 0}$ is not a Markov chain. \\
(ii). We believe (as stated in~\cite{PerStaufSteif}) that in the supercritical regime, 
the mixing time is much faster than in the subcritical regime and 
has order at most $\frac{1}{\mu}+ n^2$. Despite this, the methods in~\cite{PerStaufSteif} did not even 
yield the much larger (subcritical) upper bound of $\frac{n^2}{\mu}$ in the supercritical regime.
One of the corollaries of one of our main results is to obtain a related upper bound
uniform in $p$ (away from 0 and 1). 

The above results concerned {\sl annealed} mixing times, meaning that
the marginal distribution of $(X_t)_{t\ge 0}$ is studied. Here we study mixing and exit times of 
the conditional distribution of $(X_t)_{t\ge 0}$ given (typical) $(\eta_t)_{t\ge 0}$;
in other words, we study the {\em quenched} mixing and exit time behaviour of~$(X_t)_{t\ge 0}$.

For certain results, an annealed version immediately yields a quenched version. 
This is true (due to Fubini's Theorem) for almost sure results such as recurrence
and transience. Note, on the other hand, that annealed upper bounds on mixing times do not
automatically pass to quenched upper bounds on mixing times. One way to see this is to observe that 
if a convex combination of probability measures is close in total variation to some probability 
measure $\nu$, it still of course may be the case that all the probability measures appearing in 
the convex combination are far in total variation from $\nu$. An example of this, in the context 
of a Markov chain in a randomly evolving environment, is the following. Let $(M_n)_{n\ge 1}$ be 
an i.i.d.\ sequence of $2\times 2$ matrices where each matrix is either 
 \[ \left( \begin{array}{cc}
1 & 0 \\
0 & 1 \end{array} \right)\]
or
 \[ \left( \begin{array}{cc}
0 & 1 \\
1 & 0 \end{array} \right)\]
each with probability $1/2$. Let $(X_k)_{k\ge 0}$ 
be the process on $\{0,1\}$ which at time $n$ jumps according to the matrix $M_{n+1}$. 
It is clear that the annealed mixing time is 1 since, independent of the starting
distribution for $X_0$,
the distribution of $X_1$ is uniform. However the 
quenched mixing time is always infinite, since if we condition on any ``environment''
$(M_n)_{n\ge 1}$, the resulting time inhomogeneous Markov chain is such that
for every $k$, $X_k$ is deterministic.     On the other hand, a quenched mixing upper bound easily yields an annealed mixing upper bound. 

First of all we write $\prstart{\cdot}{x,\eta}$ for the probability measure of the walk, when the environment process is conditioned to be $\eta = (\eta_t)_{t\geq 0}$ and the walk starts from $x$. We write $\mathcal{P}$ for the distribution of the environment which is dynamical percolation on the torus, a measure on c\`adl\`ag paths $[0,\infty) \mapsto\{0,1\}^{E(\Z^d_{n})}$. We write $\mathcal{P}_{\eta_0}$ to denote the measure $\mathcal{P}$ when the starting environment is $\eta_0$. Abusing notation we write $\prstart{\cdot}{x,\eta_0}$ to mean the law of the full system when the walk starts from $x$ and the initial configuration of the environment is $\eta_0$. To distinguish it from the quenched law, we always write $\eta_0$ in the subscript as opposed to $\eta$.

Now we discuss hitting time bounds in both the quenched and annealed settings.
Let $A\subseteq \Z_n^d$. We denote by $\tau_A$ the first hitting time of $A$ by $X$, i.e.
\[
\tau_A = \inf\{t\geq 0: X_t\in A\}.
\]
\begin{theorem}\label{thm:hittingtimes}
	For all $d\geq 1$ and $\delta>0$, there exists $C=C(d,\delta)<\infty$ so that for all $p\in [\delta,1]$, for all $n$, for all $\mu \le 1/2$ and for all $\eps$, random walk in dynamical percolation 
on $\Z_n^d$ with parameters~$p$ and $\mu$ satisfies for all $A\subseteq \Z_n^d$ with $|A|\geq n^d/2$
\begin{align*}
\max_{\eta_0}\,&\cps{\eta=(\eta_t)_{t\geq 0}: \, \max_x\estart{\tau_A}{x,\eta} \geq \frac{Cn^2\log(n/\epsilon)}{\mu}} \leq \epsilon \text{ and}\\
&\max_{x,\eta_0}\estart{\tau_A}{x,\eta_0}\leq \frac{Cn^2}{\mu}.
\end{align*}
\end{theorem}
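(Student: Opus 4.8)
The plan is to prove the annealed bound $\max_{x,\eta_0}\estart{\tau_A}{x,\eta_0}\le Cn^2/\mu$ first, since it is the more robust of the two statements and its proof suggests the structure for the quenched one. Fix $A$ with $|A|\ge n^d/2$. The key quantitative input is an \emph{escape/progress estimate}: starting from any vertex $x$ and any environment configuration $\eta_0$, over a time window of length of order $1/\mu$ the walk $X$ makes, with probability bounded below by a constant $c=c(d,\delta)>0$, a displacement of order $1$ (say, moves to a uniformly random neighbour). This is because over a window of length $c'/\mu$ each fixed incident edge is open for a positive fraction of time with probability bounded below (using $p\ge\delta$ and $\mu\le 1/2$), and the walk attempts a move across a given edge at rate $1/2d$; one must be a little careful to decouple ``edge is open'' from ``walk attempts to cross it'', but this is a standard coupling with the edge-refresh clock. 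Concatenating these windows, the position process observed every $c'/\mu$ units of time \emph{stochastically dominates} a lazy simple random walk on $\Z_n^d$ that moves with probability $c$ at each step. Standard hitting-time estimates for simple random walk on the torus (via the spectral gap of order $n^{-2}$, or an eigenfunction/$L^2$ argument as in \cite{PerStaufSteif}) then give that the dominating walk hits a set of density $1/2$ in expected $O(n^2)$ steps, i.e.\ in expected time $O(n^2/\mu)$ for $X$. One packages this via the Markov property of $M_t=(\eta_t,X_t)$ to upgrade ``constant probability of hitting $A$ within $O(n^2/\mu)$'' into the stated expectation bound.

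For the quenched statement the obstacle is exactly the one flagged in the introduction: conditioning on $\eta=(\eta_t)_{t\ge0}$ destroys the Markov property of $X$ and one cannot average. The strategy is to run the annealed argument but keep track of which environments are ``bad''. Concretely, fix a reference time scale $T=\Theta(n^2\log(n/\epsilon)/\mu)$ and partition $[0,T]$ into $\Theta(\log(n/\epsilon))$ blocks each of length $\Theta(n^2/\mu)$. Say the environment $\eta$ is \emph{good on block $i$} if, \emph{for every starting vertex $x$}, $\prstart{\tau_A\le (i+1)\,\text{block length}\mid \tau_A> i\,\text{block length}}{x,\eta}\ge 1/2$; this is a deterministic property of the environment restricted to that block. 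The content of the annealed escape estimate, read the right way, is that $\mathcal{P}_{\eta_0}(\eta\text{ is bad on block }i)$ is bounded by some constant strictly less than $1$ — indeed by Markov's inequality applied to the annealed conditional-expectation-of-hitting-probability being close to $1$. Crucially, distinct blocks use disjoint portions of the environment path, so the events ``bad on block $i$'' are independent under $\mathcal{P}_{\eta_0}$ (the environment is a time-homogeneous Markov process; restricted to disjoint time intervals and conditioned on nothing, one gets a product structure after re-randomising the configuration at block endpoints — or one simply uses independence of the edge-update Poisson clocks across disjoint time intervals). Hence the probability that $\eta$ is bad on \emph{all} $\Theta(\log(n/\epsilon))$ blocks is at most $(\text{const}<1)^{\Theta(\log(n/\epsilon))}\le\epsilon$.

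Finally one has to convert ``good on some block'' into the expectation bound $\max_x\estart{\tau_A}{x,\eta}\le Cn^2\log(n/\epsilon)/\mu$ that appears in the theorem, and this is where some care is needed. On the event that $\eta$ is good on at least one of the first $k$ blocks (any $k$), one gets that from any starting point the walk hits $A$ within $k$ blocks with probability $\ge1/2$; iterating this over successive groups of $\Theta(\log(n/\epsilon))$ blocks — using that each \emph{fresh} group is good-on-some-block with probability $\ge1-\epsilon$, but now we need it \emph{for this particular $\eta$} — forces us to instead define the good event once and for all as ``$\eta$ is good on some block within every consecutive window of $\Theta(\log(n/\epsilon))$ blocks, for the first $\Theta(1/\epsilon)$-or-so windows''; a union bound over $\mathrm{poly}(n,1/\epsilon)$ windows keeps the failure probability at $\epsilon$ after adjusting constants. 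On this (deterministic-in-$\eta$) good event, the walk hits $A$ with probability $\ge1/2$ in each window of length $\Theta(n^2\log(n/\epsilon)/\mu)$, so $\estart{\tau_A}{x,\eta}$ is dominated by a geometric number of windows, giving the claimed bound (with a $\log(n/\epsilon)$, not $\log^2$, because the geometric tail is absorbed into the constant). The main obstacle, then, is not the random-walk estimate — that is essentially Theorem~\ref{thm:subOLD}-type input — but the bookkeeping that turns a per-block escape probability into a \emph{quenched} (environment-measurable) statement with the right dependence on $\epsilon$, while correctly exploiting the independence of the environment over disjoint time intervals.
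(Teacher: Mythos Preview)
Your approach is quite different from the paper's. Rather than bootstrap from the annealed hitting-time bound, the paper builds an evolving-sets argument for Markov chains in a Markovian random environment (Theorem~\ref{thm:QuenchedMixingTimeGeneral}) which yields, for each starting point $x$, a quenched $\chi^2$-mixing bound that fails with probability at most $\epsilon^{1/4}$. Because this failure probability is \emph{small} (not merely bounded away from $1$), a union bound over the $n^d$ starting points costs only a $\log n$ in the time scale and produces a bound on $\tx{\eta}$; the hitting-time statements then follow by looking at disjoint time intervals.

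Your block argument has a genuine gap at the step ``$\mathcal{P}_{\eta_0}(\eta\text{ is bad on block }i)\le\text{const}<1$''. You need ``good on block $i$'' to mean that \emph{every} starting vertex escapes with probability $\ge 1/2$ within the block (otherwise you cannot iterate, since the walk's position at the start of block $i$ is arbitrary once you condition on $\eta$). But the annealed bound $\max_{x,\eta_0'}\estart{\tau_A}{x,\eta_0'}\le Cn^2/\mu$, read through Markov's inequality, only tells you that for each \emph{fixed} $x$ the quenched escape probability exceeds $1/4$ with $\mathcal P_{\eta_0}$-probability at least some constant; it gives no uniform-in-$n$ lower bound on $\mathcal P_{\eta_0}\bigl(\forall x:\ldots\bigr)$, and a union bound over $n^d$ vertices destroys the estimate. (There is no translation invariance to exploit either, since $\eta_0$ is arbitrary.) This is precisely why the paper goes through mixing: the evolving-sets bound is strong enough per $x$ that the union bound survives. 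Your scheme can be repaired by lengthening each block to order $n^2\log n/\mu$ and using the exponential annealed tail to drive the per-$x$ failure probability below $n^{-d}$, but as written the ``constant $<1$'' is not uniform in $n$.

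Two smaller issues: (i) the environment restricted to disjoint time intervals is not independent but Markov---what you actually need is the Markov property together with uniformity of the block estimate in the starting configuration; (ii) controlling only finitely many windows bounds $\prstart{\tau_A>t}{x,\eta}$ only up to the last window, so $\estart{\tau_A}{x,\eta}$ could still be infinite on your ``good'' event; you would need, for instance, window-$i$ failure probabilities summable in $i$.
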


For $\epsilon \in (0,1)$, $x\in\Z_n^d$ and a fixed environment $\eta=(\eta_t)_{t\geq 0}$ we write
\[
\tmix{x}{\eta} = \min\left\{ t\geq 0:\tv{\prstart{X_t=\cdot}{x,\eta}}{\pi} \leq \epsilon \right\}.
\]
We also write 
\[
\tx{\eta} = \max_x \,\tmix{x}{\eta}
\]
for the quenched $\epsilon$-mixing time.
We remark that $\tx{\eta}$ could also be infinite.  Using the obvious definitions,
the standard inequality $t_{\rm{mix}}(\epsilon)\le \log_2(1/\epsilon) t_{\rm{mix}}(1/4)$ does not necessarily 
hold for time-inhomogeneous Markov chains and therefore also not for quenched mixing times.
Therefore, in such situations, to describe the rate of convergence to stationarity, it
is more natural to give bounds on~$t_{\rm{mix}}(\epsilon, \eta)$ for all $\epsilon$ rather than just considering $\epsilon=1/4$.

\begin{theorem}\label{thm:QuenchedMixingTime}
For all $d\ge 1$ and $\delta>0$, there exists 
$C=C(d,\delta)<\infty$ so that for all $p\in [\delta,1]$, for all $n$, for all $\mu \le 1/2$ and for all $\eps$, random walk in dynamical percolation 
on $\Z_n^d$ with parameters~$p$ and $\mu$ satisfies for all $x\in \Z_n^d$
\begin{equation}\label{eq:cor:supMeanSquaredDisplacement}
\max_{\eta_0}\cps{\eta=(\eta_t)_{t\geq 0}: \,\tmix{x}{\eta}\geq  \frac{Cn^2\log(1/\epsilon)}{\mu^4}}\leq \eps.
\end{equation}
\end{theorem}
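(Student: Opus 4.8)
The plan is to deduce the quenched mixing bound from the quenched hitting‑time bound of Theorem~\ref{thm:hittingtimes} by comparing the walk started from $x$ with a walk started from stationarity \emph{in the very same environment}, and then coupling the two so that they coalesce. Fix $\eta_0$. Applying Theorem~\ref{thm:hittingtimes} with error parameter a suitable small multiple of $\epsilon$ shows that, outside a set of $\cps{\cdot}$‑probability at most $\epsilon/4$, the environment $\eta$ satisfies $\max_x\estart{\tau_A}{x,\eta}\le h:=Cn^2\log(n/\epsilon)/\mu$ for every set $A$ in a prescribed modest family of ``large'' sets (one should think of half‑tori and their translates, and — after passing to the pair process below — thickened diagonals; since this family is of polynomial size the union bound over $\cps{\cdot}$ costs nothing). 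By Markov's inequality, for such $\eta$, from any starting position and any starting time the walk enters a given large $A$ within a window of length $2h$ with probability at least $\tfrac12$; chaining over $O(\log(1/\epsilon))$ disjoint windows, it does so with probability at least $1-\epsilon/8$. This is the ``transport to a prescribed region'' ingredient, and it is the only place the hitting‑time theorem is used.

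To turn this into a total‑variation bound I would run, in the fixed environment $\eta$, the walk $X$ from $x$ together with a walk $Y$ from $\pi$, so that $Y_t\sim\pi$ for all $t$ and $\tv{\prstart{X_t=\cdot}{x,\eta}}{\pi}\le \pr{X_t\ne Y_t}$ for the coupled dynamics. The coupling proceeds in two phases. In the first, one uses the transport ingredient (applied to the pair $(X,Y)$, which lives on $\Z_n^d\times\Z_n^d$, and the thickened‑diagonal target $\{(u,v):\|u-v\|_\infty\le\ell\}$ for a scale $\ell=\ell(\mu)$ to be chosen) to bring $X$ and $Y$ into a common box of side $\ell$; the key structural point is that, although in a \emph{fixed} environment the per‑direction success rate of a move attempt is not even time‑stationary, over time scales much larger than $1/\mu$ each edge has typically refreshed, so the displacement $X-Y$ is well approximated — up to constants and a few powers of $\mu$ — by a lazy random walk on the torus making effective steps at rate of order $\mu$, which reaches scale $\ell$ within time $O(n^2/\mu)$ up to such corrections. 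In the second phase, once $X$ and $Y$ sit in a common box of side $\ell$, one couples the edge refreshes and move attempts inside that box so as to force coalescence before either walk leaves the box: the laziness of the chain (edges are open only a $p$‑fraction of the time, with $p\ge\delta$) and the freshness of just‑refreshed edges supply the extra randomness needed for coalescence at scale $\ell$ in time of order $\ell^2/\mu$ up to corrections. Summing the two phases, optimising $\ell$, and noting that taking $\ell$ as large a power of $n$ as the second phase allows lets one absorb the $\log(n/\epsilon)$ of the transport step into a $\log(1/\epsilon)$, yields the stated bound $Cn^2\log(1/\epsilon)/\mu^4$.

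The main obstacle is the coupling rather than the hitting estimate: in a fixed environment $X-Y$ is genuinely \emph{not} a Markov chain (its jump rates are deterministic but time‑dependent), so everything must be run at the block scale $1/\mu$ on which edges have refreshed, and one must prove quantitatively that the block increments of the walk are approximately i.i.d.\ and sufficiently spread out, and likewise that local coalescence at scale $\ell$ succeeds with overwhelming probability. It is precisely this ``effective random walk'' approximation and its local analogue — each of which degrades when one demands it uniformly over all blocks in play — that is responsible for the deterioration from $1/\mu$ in Theorem~\ref{thm:hittingtimes} to $1/\mu^4$ here. An essentially equivalent route, which one could use to avoid explicit coupling, is to prove a uniform quenched heat‑kernel lower bound $\prstart{X_T=y}{x,\eta}\ge(1-\epsilon)/n^d$ for all $y$ and typical $\eta$ at $T=Cn^2\log(1/\epsilon)/\mu^4$, which immediately gives $\tv{\prstart{X_T=\cdot}{x,\eta}}{\pi}\le\epsilon$; this in turn combines the transport ingredient (the walk reaches a neighbourhood of $y$) with a parabolic‑Harnack‑type smoothing estimate for the quenched heat kernel, whose proof again rests on harvesting the local edge‑refresh randomness and carries the same $1/\mu$ overhead.
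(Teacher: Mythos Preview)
Your route is entirely different from the paper's, and as written it has genuine gaps.

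\textbf{What the paper actually does.} The proof does not use Theorem~\ref{thm:hittingtimes} at all, nor any coupling of two walks. Instead it applies the general evolving-sets criterion Theorem~\ref{thm:QuenchedMixingTimeGeneral} directly to the discretised chain $(\overline{\eta}_k,Z_k)=(\eta_{[k-1,k]},X_k)$. Lemmas~\ref{lem:Binomial} and~\ref{lem:KeyIsoRequirement} combine to give, for every $\eta_0$ and every $S$ with $\pi(S)\le 1/2$, the averaged expansion bound $\varphi(\eta_0,S)\ge C_1^2\mu^2/\bigl(n\,\pi(S)^{1/d}\bigr)$; plugging this into the integral in Theorem~\ref{thm:QuenchedMixingTimeGeneral} yields $\int du/(u\varphi^2(u))\lesssim (n/\mu^2)^2\log(1/\epsilon)$, whence the $n^2\mu^{-4}\log(1/\epsilon)$ bound. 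The power $\mu^{-4}$ arises transparently: one factor of $\mu$ from the probability that enough boundary edges refresh in a unit block, one factor of $\mu$ from the density of such edges, and then the square from the evolving-sets inequality $\psi\gtrsim\varphi^2$.

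\textbf{The concrete gap in your proposal.} You invoke Theorem~\ref{thm:hittingtimes} for the pair process $(X,Y)$ on $\Z_n^d\times\Z_n^d$ with target the thickened diagonal. But Theorem~\ref{thm:hittingtimes} is proved (again via Theorem~\ref{thm:QuenchedMixingTimeGeneral}) for a \emph{single} walk in dynamical percolation on $\Z_n^d$; the pair $(X,Y)$ in the \emph{same} environment is not of that form --- its one-step kernel is not that of simple random walk on a product torus with independently refreshing edges, and no statement in the paper covers it. Even granting some analogue, the thickened diagonal $\{(u,v):\|u-v\|_\infty\le\ell\}$ has density $(2\ell+1)^d/n^d$, so the hypothesis $|A|\ge n^{2d}/2$ forces $\ell$ of order $n$, which destroys the two-scale coupling scheme. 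Beyond this, the second phase (coalescence at scale $\ell$ in a fixed environment) is only sketched heuristically: you correctly note that $X-Y$ is not Markov and that one must pass to block scale $1/\mu$, but you do not supply the quantitative ``approximate i.i.d.\ increments'' or ``local coalescence'' lemmas that would be needed, nor explain where exactly the three extra powers of $1/\mu$ are spent. These are precisely the difficulties that the evolving-sets machinery is designed to bypass; turning your outline into a proof would require substantial new arguments not present in the paper.
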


We next discuss quenched lower bounds on the mixing time. It is now important whether
$p$ belongs to the sub or supercritical regime for percolation. In \cite{PerStaufSteif}, it was proved that
$\frac{n^2}{\mu}$ is the correct order of the (annealed) mixing time in the subcritical regime and it was 
conjectured there that the mixing time in the supercritical regime is much faster. 

\begin{proposition}\label{thm:QuenchedMixingTimeLower}
For all $d\ge 1$, $p\in (0,p_c(\mathbb{Z}^{d}))$, $\eps>0$ and $M$, there exists 
$\beta=\beta(d,p,\eps,M)>0$ and $n_0=n_0(d,p,\epsilon,M)$ so that if
$(\eta_t)_{t\ge 0}$ is dynamical percolation started in stationarity, then for all $n\geq n_0$ we have 
\begin{equation}\label{eq:cor:QuenchedUpper}
\mathcal{P}\left(\eta=(\eta_t)_{t\geq 0}:\, t_{\rm{mix}}(1-\epsilon,\eta)\le \frac{\beta n^2}{\mu}\right)\le \frac{1}{M}.
\end{equation}
\end{proposition}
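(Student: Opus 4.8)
The plan is to derive this quenched lower bound from an \emph{annealed} localization estimate together with Markov's inequality. It suffices to exhibit a single bad starting point, so fix the walk to start at the origin and recall that $t_{\rm mix}(1-\epsilon,\eta)=\max_x t_{\rm mix}(1-\epsilon,x,\eta)$. Put $T:=\beta n^2/\mu$ and $B:=\{y\in\Z_n^d:|y|\le\delta n\}$, where $\delta=\delta(d,\epsilon)>0$ is chosen small enough that $\pi(B)\le\epsilon/4$ (valid for $n\ge n_0$). Call an environment $\eta$ \emph{localizing} if $\prstart{X_s\in B\text{ for all }s\le T}{0,\eta}\ge 1-\epsilon/2$. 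If $\eta$ is localizing then for every $t\le T$ we have $\prstart{X_t\in B}{0,\eta}\ge 1-\epsilon/2$, hence $\tv{\prstart{X_t=\cdot}{0,\eta}}{\pi}\ge(1-\epsilon/2)-\pi(B)>1-\epsilon$, and therefore $t_{\rm mix}(1-\epsilon,\eta)>T$. Thus $\{\eta:t_{\rm mix}(1-\epsilon,\eta)\le T\}\subseteq\{\eta\text{ not localizing}\}$, and it remains to bound the probability of the latter event.

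Since the quenched probability $Z(\eta):=\prstart{\exists s\le T:X_s\notin B}{0,\eta}$ is a measurable function of the trajectory $\eta$, Fubini's theorem gives that its $\mathcal P$-average over the stationary environment equals the annealed probability $\prstart{\exists s\le T:X_s\notin B}{0,\pi_p}$, so Markov's inequality yields
\[
\cp{\eta\text{ not localizing}}\ \le\ \frac{2}{\epsilon}\,\prstart{\exists s\le T:X_s\notin B}{0,\pi_p}\ \le\ \frac{2}{\epsilon}\,\prstart{\sup_{s\le T}|X_s|\ge\delta n}{0,\pi_p}.
\]
Hence the proposition follows once we establish the annealed bound $\prstart{\sup_{s\le T}|X_s|\ge\delta n}{0,\pi_p}\le\epsilon/(2M)$ for a suitably small $\beta=\beta(d,p,\epsilon,M)$.

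For this I would prove the diffusive maximal estimate
\[
\estart{\,\sup_{s\le T}|X_s|^2\,}{0,\pi_p}\ \le\ C\,(\mu T+1),\qquad C=C(d,p)<\infty,
\]
after which Chebyshev's inequality gives $\prstart{\sup_{s\le T}|X_s|\ge\delta n}{0,\pi_p}\le C(\beta n^2+1)/(\delta n)^2\le 2C\beta/\delta^2$ for $n\ge n_0$, which is at most $\epsilon/(2M)$ provided $\beta\le\epsilon\delta^2/(4MC)$; since $\delta$ depends only on $(d,\epsilon)$ and $C$ only on $(d,p)$, this fixes $\beta$ and $n_0$ as functions of $(d,p,\epsilon,M)$. (Note $\mu T=\beta n^2$ is free of $\mu$, so the estimate is uniform in $\mu\le 1/2$.) The displacement estimate is a maximal strengthening of the diffusive bound that underlies the lower bound in Theorem~\ref{thm:subOLD}, and I would prove it by the same mechanism: first couple $X$ with the analogous walk $\widehat{X}$ in stationary dynamical percolation on all of $\Z^d$, so that $X_s$ is the image of $\widehat{X}_s$ under $\Z^d\to\Z_n^d$ and in fact $X_s=\widehat{X}_s$ until $\widehat{X}$ first leaves $[-n/2,n/2]^d$; since that exit already forces $\sup_{s\le T}|\widehat{X}_s|\ge n/2\ge\delta n$, it is enough to bound $\estart{\sup_{s\le T}|\widehat{X}_s|^2}{0,\pi_p}$ in infinite volume. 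There subcriticality enters: since $p<p_c(\Z^d)$, the cluster currently occupied by the walk has finite diameter with an exponential tail, so a refresh disturbs that cluster only at rate $O(\mu)$ and between consecutive such disturbances the walk moves an $O(1)$ distance; dominating the motion of $\widehat{X}$ by a continuous-time random walk with jump rate $O(\mu)$ and $O(1)$ increments, and summing over the $O(\mu T)$ time-windows of length $O(1/\mu)$ (with Doob's inequality applied inside each window), yields the maximal bound.

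The hard part is making the comparison with an $O(\mu)$-rate random walk rigorous, because the environment seen from the moving walk is \emph{not} distributed as $\pi_p$: the walk lingers disproportionately long in regions of high local connectivity, so one must size-bias appropriately, or exploit the reversibility of the walk with respect to counting measure for a frozen environment, in order to control---uniformly in $s$---the law of the cluster containing $\widehat{X}_s$ and hence the rate and size of its moves. Passing to $\Z^d$ removes the torus wraparound, and the exponential cluster-size tail absorbs the (rare) large subcritical clusters on the torus. Finally, I note that if \cite{PerStaufSteif} already records the annealed localization statement $\prstart{\sup_{s\le T}|X_s|\ge\delta n}{0,\pi_p}\le\epsilon$ for $T\le c n^2/\mu$---essentially what their proof of the lower bound in Theorem~\ref{thm:subOLD} delivers---then the present argument reduces to the elementary Markov-inequality passage of the first two paragraphs.
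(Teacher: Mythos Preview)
Your overall strategy---pass from a quenched statement to an annealed one and then apply Markov's inequality---is exactly the paper's, but the paper executes it in three lines while you take a substantial detour. The paper does not go through any displacement or localization estimate at all. Instead, it sets $\sigma=\min\{\epsilon^2,1/M^2\}$ and invokes Theorem~\ref{thm:subOLD} directly to get
\[
\tv{\prstart{X_{\beta n^2/\mu}=\cdot}{0,\pi_p}}{\pi}\ge 1-\sigma,
\]
then uses convexity of total variation to conclude that the $\mathcal P$-average of the \emph{quenched} TV distance at that single time is also $\ge 1-\sigma$, and finally applies Markov's inequality to the nonnegative random variable $1-\tv{\prstart{X_{\beta n^2/\mu}=\cdot}{0,\eta}}{\pi}$. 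This yields $\mathcal P(\text{quenched TV}\le 1-\sqrt\sigma)\le\sqrt\sigma$, and the choice of $\sigma$ finishes.

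The point you are missing is that the annealed mixing-time lower bound \emph{is already} a statement about TV distance at a fixed time, so there is no need to upgrade it to a maximal displacement bound and then convert back to TV via a localization set $B$. Your route is not wrong---the maximal estimate you sketch is indeed (essentially) what underlies the proof of Theorem~\ref{thm:subOLD} in \cite{PerStaufSteif}---but it reproves a strengthening of a result you are allowed to cite, and the ``hard part'' you flag (controlling the environment seen from the particle) is entirely avoided by the paper's argument. Your final sentence almost spots this: the reduction is even more elementary than you suggest, since Theorem~\ref{thm:subOLD} plus convexity replaces the localization input wholesale.
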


\begin{proof}[\bf Proof]
Fix $d\ge 1$, $p\in (0,p_c(\mathbb{Z}^{d}))$, $\eps$ and $M$. 
Let $\sigma:=\min\{\eps^2,\frac{1}{M^2}\}$. By Theorem~\ref{thm:subOLD} there exists
$\beta=\beta(d,p,\sigma)$ so that for all large $n$ and for all $\mu\le 1/2$,
\[
\tv{\prstart{X_{\frac{\beta n^2}{\mu}} =\cdot}{0,\pi_p}}{\pi}\ge 1-\sigma.
\]
 Since
$$
\prstart{X_{\frac{\beta n^2}{\mu}}=\cdot}{0,\pi_p}=
\int\prstart{X_{\frac{\beta n^2}{\mu}}=\cdot}{0,\eta} \cps{(\eta_t)_{t\ge 0}}\,d\pi_p(\eta_0),
$$
convexity of the total variation norm in the sense that
$$
\tv{\int \mu_\alpha d\rho(\alpha)}{\pi}\le \int \tv{\mu_\alpha}{\pi}d\rho(\alpha)
$$
yields that
\begin{equation}\label{eq:TVinequality}
\int\tv{\prstart{X_{\frac{\beta n^2}{\mu}}=\cdot}{0,\eta}}{\pi}\cps{(\eta_t)_{t\geq 0})}\, d\pi_p(\eta_0)
\ge 1-\sigma,
\end{equation}
where $\eta=(\eta_t)_{t\geq 0}$.
This now implies that
$$
\mathcal{P}\left(\eta=(\eta_t)_{t\geq 0}: \, \tv{\prstart{X_{\frac{\beta n^2}{\mu}}=\cdot }{0,\eta}}{\pi}\le 1-\sigma^{\frac{1}{2}}\right)
\le \sigma^{\frac{1}{2}}
$$
Since $\sigma:=\min\{\eps^2,\frac{1}{M^2}\}$, this gives the result.
\end{proof}

The following is a quenched lower bound in the context of Theorem \ref{thm:supOLD}. The first statement is proved as the previous result. The second one follows from the fact that with high probability the environment will be such that there will exist a vertex isolated throughout the interval $[0,\beta/\mu]$.

\begin{proposition}\label{thm:supquenched}
Given $d\ge 1$, $\epsilon\in (0,1)$, $p<1$ and $M$, there exist $\beta>0$ and $n_0>0$
such that, for all $n\ge n_0$ and for all $\mu\le 1/2$, if $(\eta_t)_{t\ge 0}$ is dynamical percolation started in stationarity, then
\begin{align*}
\mathcal{P}\left(\eta=(\eta_t)_{t\geq 0}: \, t_{\rm{mix}}(\epsilon,\eta)\le \beta n^2\right)&\le \frac{1}{M} \,\text{ and }\\
\mathcal{P}\left(\eta=(\eta_t)_{t\geq 0}:\, \tx{\eta}\le \frac{\beta}{\mu}\right)&\le \frac{1}{M}. 
\end{align*}
\end{proposition}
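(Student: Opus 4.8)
The two inequalities are of different natures. The first is a diffusive (spatial) lower bound, obtained exactly as Proposition~\ref{thm:QuenchedMixingTimeLower} with Theorem~\ref{thm:supOLD}(i) playing the role of Theorem~\ref{thm:subOLD}. The second is a trapping statement: a stationary environment, with high probability, contains a vertex that stays isolated throughout $[0,\beta/\mu]$, and this alone prevents the walk started there from mixing. I take the two in turn.

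\emph{First inequality.} Fix $d,\epsilon,p,M$. Observe first that for a fixed environment $\eta$ the walk $X$ is a genuine time-inhomogeneous Markov chain for which the uniform measure $\pi$ is reversible at every instant, since the jump rate from $u$ to an adjacent $v$ at time $t$ is $\frac{1}{2d}\1[\{u,v\}\text{ open at }t]$, which is symmetric in $u$ and $v$. Hence each $t\mapsto\tv{\prstart{X_t=\cdot}{x,\eta}}{\pi}$ is non-increasing, so $\{t_{\rm{mix}}(\epsilon,\eta)\le\beta n^2\}=\{\max_x\tv{\prstart{X_{\beta n^2}=\cdot}{x,\eta}}{\pi}\le\epsilon\}$ and it suffices to control the quenched total variation distance at the single time $\beta n^2$. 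Exactly as Theorem~\ref{thm:subOLD} is used in Proposition~\ref{thm:QuenchedMixingTimeLower}, Theorem~\ref{thm:supOLD}(i) — really the diffusive confinement behind it: by time $\beta n^2$ the walk has not left an $\ell^\infty$-box of side $o(n)$ except with probability tending to $0$ as $\beta\to0$, uniformly over $p<1$ and $\mu\le1/2$ — gives that for every $\sigma>0$ there is $\beta=\beta(d,\sigma)>0$ with $\tv{\prstart{X_{\beta n^2}=\cdot}{0,\pi_p}}{\pi}\ge1-\sigma$ for all large $n$ and all $\mu\le1/2$. From here I would follow Proposition~\ref{thm:QuenchedMixingTimeLower} verbatim: writing the annealed law as the $\pi_p$-average of the quenched laws, convexity of total variation gives $\int\tv{\prstart{X_{\beta n^2}=\cdot}{0,\eta}}{\pi}\,d\mathcal{P}\ge1-\sigma$; as the integrand lies in $[0,1]$, Markov's inequality gives $\mathcal{P}\big(\tv{\prstart{X_{\beta n^2}=\cdot}{0,\eta}}{\pi}\le1-\sigma^{1/2}\big)\le\sigma^{1/2}$; taking $\sigma=\min\{(1-\epsilon)^2,M^{-2}\}$ and using translation symmetry of the torus to pass from the starting point $0$ to $\max_x$ completes the first inequality, with $\beta=\beta(d,\epsilon,M)$.

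\emph{Second inequality.} Call a vertex $v$ \emph{isolated throughout} $[0,\beta/\mu]$ if all $2d$ edges incident to $v$ are closed at every time of that interval. On this event the walk started at $v$ never moves, so $X_t=v$ for all $t\le\beta/\mu$ and hence $\tv{\prstart{X_t=\cdot}{v,\eta}}{\pi}=1-n^{-d}>\epsilon$ for $n$ large and every such $t$; thus $\tx{\eta}\ge\tmix{v}{\eta}>\beta/\mu$. Therefore $\mathcal{P}(\tx{\eta}\le\beta/\mu)\le\mathcal{P}(\text{no vertex is isolated throughout }[0,\beta/\mu])$, and it remains to show the right-hand side tends to $0$. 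For a fixed $v$, by stationarity all its edges are closed at time $0$ with probability $(1-p)^{2d}$, and conditionally on that each stays closed on $[0,\beta/\mu]$ with probability $e^{-(p\mu)(\beta/\mu)}=e^{-p\beta}$; by independence of the $2d$ edges, $q:=\mathcal{P}(v\text{ isolated throughout})=\big((1-p)e^{-p\beta}\big)^{2d}\in(0,1)$, a constant depending only on $d,p,\beta$, in particular not on $n$ or $\mu$. Picking a set $S$ of at least $c_d n^d$ vertices with pairwise disjoint edge-neighbourhoods (for instance pairwise non-adjacent vertices, with $c_d=c_d(d)>0$), the events $\{v\text{ isolated throughout}\}$, $v\in S$, are independent, so $\mathcal{P}(\text{no vertex isolated throughout})\le(1-q)^{c_d n^d}\to0$, which is at most $1/M$ once $n\ge n_0(d,p,\epsilon,M)$. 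Any fixed $\beta$ works here, so one keeps the $\beta$ from the first part.

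\emph{Main obstacle.} The only non-mechanical point is extracting from Theorem~\ref{thm:supOLD}(i) the \emph{strong} annealed bound $\tv{\cdot}{\pi}\ge1-\sigma$ at time $\beta n^2$ (rather than just $>\sigma$), uniformly in $p$ and $\mu$; this is the input used the same way in Proposition~\ref{thm:QuenchedMixingTimeLower}, and it follows from a diffusivity estimate — in the stationary environment $\estart{\norm{X_t-X_0}^2}{0,\pi_p}\le t$, so at time $\beta n^2$ the annealed law of $X$ concentrates, up to mass $\beta K^2$, on a box of side $2n/K$, and letting $K\to\infty$ and then $\beta\to0$ pushes the total variation distance from $\pi$ above $1-\sigma$. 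The convexity/Markov manipulation and the trapping estimate are routine.
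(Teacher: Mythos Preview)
Your proposal is correct and follows exactly the approach the paper indicates: the first inequality is the argument of Proposition~\ref{thm:QuenchedMixingTimeLower} with Theorem~\ref{thm:supOLD}(i) in place of Theorem~\ref{thm:subOLD}, and the second is the isolated-vertex trapping bound. One small simplification: the ``main obstacle'' you flag is not really an obstacle, since Theorem~\ref{thm:supOLD}(i) is stated for \emph{every} $\epsilon>0$; applying it with the parameter $1-\sigma$ already yields $\tv{\prstart{X_{\beta n^2}=\cdot}{0,\pi_p}}{\pi}>1-\sigma$ for $\beta<C_1(d,1-\sigma)$, so the separate diffusivity estimate is not needed.
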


Mixing times in the supercritical case will be studied in~\cite{PerSoSt}. Some of the results in this paper will be used there.

%
%
%

\section{General setup}

Given a general finite state Markov chain $p(\cdot,\cdot)$ with state space $\Omega$ and a
stationary distribution $\pi$, we let 
\begin{equation}\label{eq:DefnQ}
Q_{p}(A,B)= Q_{p,\pi}(A,B):=\sum_{x\in A,y\in B} \pi(x)p(x,y) 
\end{equation}
for $A,B\subseteq \Omega$. Also, for $S\subseteq \Omega$,
we let 
$$\phi_p(S)=\phi_{p,\pi}(S):=\frac{Q_{p}(S,S^c)}{\nu(S)}.$$ 
Observe that
$$
\phi_p(S)=\prcond{X_1\not\in S}{X_0\in S}{}
$$
where $(X_k)_{k\in \Z}$ is the stationary Markov chain associated to $p(x,y)$ and $\pi$. We call
$\phi_{p}(S)$ the expansion of $S$ (relative to the Markov chain $p(x,y)$ and 
stationary distribution $\pi$). Note that~$p(x,y)$ may have more than one stationary distribution 
and so we need to make $\pi$ explicit.

Finally we recall standard notation. Let $\mu$ and $\nu$ be two probability measures on the same space~$\Omega$. We write
\[
\chi^2(\mu,\nu) = \sum_{x\in \Omega} \nu(y) \left(\frac{\mu(y)}{\nu(y)} -1\right)^2 = \sum_y \frac{\mu(y)^2}{\nu(y)} -1.
\]
By Cauchy-Schwartz we have
\[
2\tv{\mu}{\nu} \leq \chi(\mu,\nu).
\]

We will now consider the following general set up of a finite state Markov chain 
in a Markovian evolving environment.

Let $\EE$ be a state space for a discrete time homogeneous Markov chain $\eta$ with transition matrix~$R$. Moreover, for every $\zeta\in \EE$ let
$(p_\zeta(x,y))_{x,y\in \S}$ be a transition matrix on another state space $\S$. Assume $\pi$ is a probability distribution on $\S$ which is stationary for $p_\zeta$ for all $\zeta\in \EE$ and has full support. 

We now define an annealed discrete time Markov chain $(\eta,X)$ on $\EE \times \S$ evolving as follows: when in state $(\zeta,x)$, it jumps to the state $(\zeta',x')$ by first choosing $\zeta'$ at random according to $R(\zeta,\cdot)$ and then choosing $x'$ at random according to $p_{\zeta'}(x,\cdot)$.
In symbols if $Q$ denotes the annealed transition matrix we have 
\[
Q((\zeta,x),(\zeta',x')) = R(\zeta,\zeta') p_{\zeta'}(x,x').
\]

Given a realisation of the environment process, $\eta=(\eta_i)_{i\geq 0}$, the coordinate $X$ becomes a time inhomogeneous Markov chain with transition matrix $p_{\eta_i}$ at time $i-1$.

Observe that since $\pi p_\zeta=\pi$ for all $\zeta\in\calE$, it follows easily that 
$\max_x\tv{\prstart{X_k=\cdot}{x,\eta}}{\pi}$ is decreasing in $k$ for any fixed environment
$\eta$. Next, as defined in the introduction we let
\[
\tx{\eta}
:=\inf\left\{k:\,\max_x\tv{\prstart{X_k=\cdot}{x,\eta}}{\pi}\le\eps\right\}=
\max_{x\in \S}\tmix{\eta}{x}
\]
be the $\eps$-mixing time in the environment $\eta$. 

The following general theorem yields quenched upper bounds on the mixing time in our general set up 
of a Markov chain in a Markovian evolving environment. We let $\pi_{\star}:=\min_x\pi(x)$ below. For $\zeta\in \EE$ and $S\subseteq \S$ we let
$$\varphi(\zeta,S):=\estart{\varphi_{p_{\eta_1}}(S)}{\zeta}.$$ 
Notice that in the previous expression we average over the new environment $\eta_1$, i.e.\ we run the environment process for one step starting from $\zeta$ and use the transition matrix that it yields. 
For  $r\in [\pi_\star,\frac{1}{2}]$, let
$$
\varphi(r):=\inf\{\varphi(\zeta,S):\zeta\in\calE, \pi(S)\le r\}
$$
and $\varphi(r):=\varphi(\frac{1}{2})$ for $r\ge \frac{1}{2}$.
Clearly $\varphi(r)$ is weakly decreasing in $r$. It is crucial for our applications that in the above definitions, 
the minimization of $S$ occurs outside of the expectation rather than inside. If the minimum occurred on the inside, 
then $\varphi(r)$ would be much smaller and the following result would be much weaker.

\begin{theorem}\label{thm:QuenchedMixingTimeGeneral}
Consider a finite state Markov chain~$X$ in a Markovian evolving environment satisfying $p_{\zeta}(x,x)\geq \gamma$ for all $\zeta$ and all $x$ with $\gamma\in (0,1/2]$.  For all $\eps >0$ and $x\in \S$ if
$$
n\ge 1+ \frac{2(1-\gamma)^2}{\gamma^2}\int_{4\pi(x)}^{4/\eps} \frac{du}{u\varphi^2(u)}
$$
then for all $\zeta \in \EE$,
$$
\mathcal{P}_{\zeta}\left(\eta=(\eta_t)_{t\geq 0}: \, \chi(\prstart{X_n=\cdot}{x,\eta},\pi)\ge \eps^{1/4}\right)\le \eps^{1/4}.
$$
\end{theorem}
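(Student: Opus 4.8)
The plan is to control the quenched $\chi^2$-distance $\chi^2(\prstart{X_n=\cdot}{x,\eta},\pi)$ in expectation over the environment, and then deduce the high-probability statement by Markov's inequality (together with the trivial bound $2\tv{\mu}{\nu}\le\chi(\mu,\nu)$ and the conversion $\eps^{1/4}\cdot\eps^{1/4}=\eps^{1/2}$-type bookkeeping). So the real content is: for a fixed environment $\eta$, track the quantity $f_k:=\chi^2(\prstart{X_k=\cdot}{x,\eta},\pi)=\sum_y \prstart{X_k=y}{x,\eta}^2/\pi(y)-1$, and show that one step of the time-inhomogeneous chain (with transition matrix $p_{\eta_{k+1}}$) contracts $f_k$ by an amount governed by the expansion profile $\varphi$. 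The key algebraic input is the standard identity/inequality relating the Dirichlet form drop to $\chi^2$: writing $\nu_k$ for the law of $X_k$, one has $f_k-\econd{f_{k+1}}{\eta_{k+1}}\ge$ (something like) the Dirichlet form $\calE_{p_{\eta_{k+1}}}(\nu_k/\pi)$, and then lower-bounding this Dirichlet form by a Cheeger-type / co-area argument in terms of $\varphi_{p_{\eta_{k+1}}}(S)$ over super-level sets $S=\{y:\nu_k(y)/\pi(y)>t\}$ of the density. The laziness hypothesis $p_\zeta(x,x)\ge\gamma$ enters to control the backward chain / make the Dirichlet form comparison quantitative, contributing the $\gamma^2/(1-\gamma)^2$ factor.

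The crucial subtlety — and the reason $\varphi$ is defined with the infimum over $S$ \emph{outside} the expectation over $\eta_1$ — is that after taking conditional expectation over the next environment step $\eta_{k+1}$, the super-level set $S$ is determined by $\nu_k$, which depends only on $\eta_0,\dots,\eta_k$ and hence is \emph{independent} of $\eta_{k+1}$. Thus for each fixed $S$ appearing in the co-area decomposition, $\econd{\varphi_{p_{\eta_{k+1}}}(S)}{\eta_k}=\varphi(\eta_k,S)\ge\varphi(\pi(S))$, and averaging over the co-area variable $t$ one obtains a clean recursion of the form
\[
\econd{f_{k+1}}{\F_k}\le f_k - c\,\frac{\gamma^2}{(1-\gamma)^2}\,\varphi^2(\text{typical level set size})\cdot(\text{quadratic term in }f_k),
\]
where $\F_k=\sigma(\eta_0,\dots,\eta_k)$. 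After taking full expectation this gives a deterministic recursion $g_{k+1}\le g_k - h(g_k)$ for $g_k:=\E{f_k}$ with $h$ explicit; standard ODE-comparison (comparing the difference inequality to $g'=-h(g)$) then shows $g_n\le\eps$ once $n$ exceeds $\int dg/h(g)$, and a change of variables $u\leftrightarrow g$ (using $\pi(S)\asymp 1/u$ when the level-set density is of order $u$, roughly $g\asymp u$) converts this into exactly the stated integral $\int_{4\pi(x)}^{4/\eps}\frac{du}{u\varphi^2(u)}$, with the initial value $f_0=1/\pi(x)-1$ explaining the lower limit $4\pi(x)$ (the factor $4$ being slack absorbed in the argument).

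Concretely the steps I would carry out are: (1) fix $\eta$, set $\nu_k=\prstart{X_k\in\cdot}{x,\eta}$, write $f_k=\chi^2(\nu_k,\pi)$, and reduce the theorem to showing $\E{f_n}\le\eps^{1/2}$ via two applications of Markov/Cauchy–Schwarz; (2) prove the one-step Dirichlet-form drop $f_k-\econd{f_{k+1}}{\eta_{k+1},\F_k}\ge \tfrac12\sum_{y,z}\pi(y)p_{\eta_{k+1}}(y,z)(\phi_k(y)-\phi_k(z))^2$ where $\phi_k=\nu_k/\pi$, using reversibility-type manipulations — here the laziness is used to lower bound off-diagonal transition mass of the relevant chain, inserting the $\gamma^2/(1-\gamma)^2$ factor; (3) apply the co-area formula $\sum_{y,z}\pi(y)p_{\eta_{k+1}}(y,z)(\phi_k(y)-\phi_k(z))^2 = 2\int_0^\infty Q_{p_{\eta_{k+1}}}(S_t,S_t^c)\,dt'$-style identity (with $S_t=\{\phi_k>t\}$) and bound $Q_{p_{\eta_{k+1}}}(S_t,S_t^c)=\pi(S_t)\varphi_{p_{\eta_{k+1}}}(S_t)$, crucially using $\F_k$-measurability of $S_t$ to take $\econd{\cdot}{\eta_{k+1}}$ inside and get $\varphi(\eta_k,S_t)\ge\varphi(\pi(S_t))$; (4) combine to get $\econd{f_{k+1}}{\F_k}\le f_k - \tfrac{\gamma^2}{(1-\gamma)^2}\Phi(f_k)$ for an explicit $\Phi$, take expectations, and run the ODE comparison to extract $n$; (5) substitute $u=4/(\text{level})$ or the appropriate monotone change of variable to match the stated bound. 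The main obstacle I anticipate is step (3)–(4): doing the co-area bookkeeping so that the relevant "level-set size" $\pi(S_t)$ is correctly paired with the value of $f_k$ at which the recursion is being evaluated — i.e. proving the quantitative statement that the bulk of the $\chi^2$-mass sits on level sets of $\pi$-measure $\lesssim 1/f_k$, which is what makes $\varphi$ get evaluated at argument $\asymp 1/f_k$ and ultimately produces the $\frac{du}{u\varphi^2(u)}$ integrand rather than something weaker — and handling the edge behaviour near $\pi(S)\approx 1/2$ where $\varphi$ is frozen.
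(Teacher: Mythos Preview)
Your strategy is genuinely different from the paper's. The paper does \emph{not} track $f_k=\chi^2(\nu_k,\pi)$ directly; instead it runs the Morris--Peres evolving-set process in the time-inhomogeneous setting, works with the Doob transform, and controls $Z_n=\sqrt{\pi(S_n^{\#})}/\pi(S_n)$. The quenched bound $\chi(\prstart{X_n=\cdot}{x,\eta},\pi)\le \widehat{\mathbb E}_{x,\eta}[Z_n]$ holds for every fixed environment, and the recursion $\widehat{\mathbb E}[Z_{n+1}\mid S_n,\eta_{\le n}]\le Z_n\bigl(1-\psi(\eta_n,S_n)\bigr)$ involves a single \emph{specific} random set $S_n$ rather than an infimum, so the environment expectation naturally lands on $\psi(\eta_n,S_n)=\mathbb E_{\eta_n}[\psi_{p_{\eta_1}}(S_n)]$ with the set fixed. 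The conversion $\psi\ge \tfrac{\gamma^2}{2(1-\gamma)^2}\varphi^2$ is Morris--Peres Lemma~10 applied pointwise in the one-step environment, followed by Jensen to get $\varphi(\zeta,S)^2$. Then Morris--Peres Lemma~11(iii) handles the random ODE comparison, and Markov's inequality on $\mathbb E_\zeta[\chi]\le\sqrt\eps$ finishes.

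Your plan is in the spirit of the Goel--Montenegro--Tetali spectral-profile method and you correctly isolate the crucial point: the super-level sets of $\nu_k/\pi$ are $\mathcal F_k$-measurable, so the expectation over $\eta_{k+1}$ hits a fixed set and produces $\varphi(\eta_k,S_t)$ with the infimum outside. However, two steps as written do not go through. First, the identity in your step~(3) is false: the co-area formula yields $\sum_{y,z}\pi(y)p(y,z)\,|\phi(y)-\phi(z)|=2\int_0^\infty Q(S_t,S_t^c)\,dt$, i.e.\ the $\ell^1$ form, not the $\ell^2$ Dirichlet form you need for the $\chi^2$ drop. Second, the one-step drop in step~(2) for a \emph{non-reversible} $p_\zeta$ is $\|\phi_k\|^2-\|P^*\phi_k\|^2=\langle(I-PP^*)\phi_k,\phi_k\rangle$, the Dirichlet form of the multiplicative reversibilization $PP^*$, whose conductance is not directly $\varphi_{p_\zeta}$; the laziness does enter here, but the passage to $\varphi_{p_\zeta}^2$ on specific level sets (rather than via the spectral profile $\Lambda$, whose definition already has the infimum \emph{inside}) is exactly the delicate point. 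The evolving-set route sidesteps both issues simultaneously: it never sees an $\ell^1/\ell^2$ mismatch, and it manufactures a single evolving set whose drift is governed by $\psi$, which Morris--Peres relate to $\varphi^2$ for arbitrary (lazy) chains. Your programme can probably be completed, but not with the co-area step as stated; you would need a level-set argument that stays in $\ell^2$ and handles non-reversibility without collapsing to an infimum over sets before averaging over $\eta_{k+1}$.
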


\begin{remark}\label{rem:variant}
\rm{
We note that the above theorem remains true in the following variant of the Markov chain described above. Suppose that at every step, the chain $X$ remains in place with probability $1/2$ and with probability $1/2$ it jumps according to the transition matrix given by the environment at this time. When $X$ stays in place (because of laziness), then the environment at the next step also stays in place, otherwise it moves according to its transition matrix. So the transition matrix of the environment depends on the extra randomness coming from whether $X$ made an actual jump or not. For the changes needed in the proof see Remark~\ref{rem:newproof}.
}	
\end{remark}

\section{Random walk on dynamical percolation}

In this section we prove Theorem~\ref{thm:QuenchedMixingTime} using the general result Theorem~\ref{thm:QuenchedMixingTimeGeneral} stated in the previous section. 
Before starting the proof we introduce some notation and prove some preliminary results. 

Let $\calE:= D_{[0,1]}(\{0,1\}^{E(\Z_n^d)})$ be the space of right 
continuous paths with left limits from $[0,1]$ into $\{0,1\}^{E(\Z_n^d)}$. 
Let $\overline{\eta}_k:= \eta_{[k-1,k]}$
and $Z_k:= X_{k}$. Then it is easy to see that 
$((\overline{\eta}_k,Z_k))_{k\ge 0}$ is a Markov chain in a Markovian evolving environment.
It is clear that $(\overline{\eta}_k)_{k\ge 0}$ 
is a Markov chain with state space $\calE$ and that for all $\zeta\in\calE$, the corresponding 
Markov chain $p_\zeta$ simply corresponds to doing random walk on $\Z_n^d$ for time $1$
during which time the bond configuration evolution is fixed to be $\zeta$.

\begin{lemma}\label{lem:Binomial}
	For all $\delta>0$, there exists $\sigma=\sigma(\delta)>0$ so that 
for all $d\ge 1$, for all $n$, for all $\mu\le 1/2$, for all $p\in [\delta,1]$,
for all $A\subseteq  E(\Z_n^d)$ and for all $\eta_0$,
$$
\cps{\left|a\in A:\eta_t(a)=1\mbox{ for all } t\in \left[1/2,1\right]\right| \ge |A|\sigma\mu}\ge \sigma\mu.
$$
\end{lemma}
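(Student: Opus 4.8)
The statement concerns a fixed set of edges $A$ and asks for a lower bound, uniform in the starting configuration $\eta_0$, on the probability that a $\sigma\mu$-fraction of $A$ stays open throughout $[1/2,1]$, this probability itself being at least $\sigma\mu$. The natural first step is to reduce to a single edge: for a fixed edge $a\in A$, estimate $\cps{\eta_t(a)=1 \text{ for all } t\in[1/2,1]}$ from below by a quantity of order $\mu$. Since edges evolve independently, it suffices to analyze one two-state continuous-time chain. Regardless of the state of $a$ at time $0$, with probability bounded below (uniformly in $\mu\le 1/2$, using only that the ``off $\to$ on'' rate is $p\mu$ and $p\ge\delta$, together with the total jump rate being at most $\mu/2$) the edge is open at time $1/2$; and given it is open at time $1/2$, the probability it makes no jump in $[1/2,1]$ is $e^{-(1-p)\mu/2}\ge e^{-\mu/4}\ge c$ for an absolute constant $c>0$. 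Wait — more carefully: the rate of jumping from open to closed is $(1-p)\mu$, so the no-jump probability over an interval of length $1/2$ is $e^{-(1-p)\mu/2}$, which is bounded below by $e^{-1/4}$ since $\mu\le 1/2$. The delicate point is the first factor: starting from the \emph{closed} state, the probability of being open at time $1/2$ is something like $p(1-e^{-\mu/2})\asymp p\mu\asymp\mu$, which is the source of the linear-in-$\mu$ factor and the reason the final bound cannot be a constant. So for each $a$, $\cps{\eta_t(a)=1\;\forall t\in[1/2,1]}\ge c_1(\delta)\mu$ for some $c_1(\delta)>0$, uniformly over $\eta_0$, $n$, $d$, $p\in[\delta,1]$.

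Let $N$ denote the number of edges $a\in A$ that are open throughout $[1/2,1]$. By the single-edge bound and linearity of expectation, $\cps{N}\ge c_1\mu|A|$. The goal is to convert this expectation bound into the stated ``$N\ge\sigma\mu|A|$ with probability $\ge\sigma\mu$'' statement. This is where I would use a second-moment / Paley--Zygmund-type argument, exploiting independence of edges. Since the events $\{a \text{ open throughout } [1/2,1]\}$ for distinct $a$ are independent (edges evolve independently), $\var_{\eta_0}(N)=\sum_{a\in A}\var(\1_a)\le\sum_{a\in A}\E[\1_a]=\cps{N}$. By Paley--Zygmund, for $\theta\in(0,1)$,
\[
\cps{N\ge\theta\,\cps{N}}\ge(1-\theta)^2\frac{\cps{N}^2}{\E[N^2]}=(1-\theta)^2\frac{\cps{N}^2}{\var(N)+\cps{N}^2}\ge(1-\theta)^2\frac{\cps{N}^2}{\cps{N}+\cps{N}^2}.
\]
If $\cps{N}\le 1$, the right-hand side is at least $(1-\theta)^2\cps{N}/2\ge(1-\theta)^2 c_1\mu|A|/2$; if $\cps{N}>1$, it is at least $(1-\theta)^2/2$, which dominates $\mu|A|$-type quantities only when $|A|$ is small, but in any case is bounded below by a constant. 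Taking $\theta=1/2$ and tracking both cases, we get $\cps{N\ge\tfrac12 c_1\mu|A|}\ge c_2\min(\mu|A|,1)$ for a constant $c_2=c_2(\delta)$.

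Finally I would package this into the claimed form. Set $\sigma:=\min\{c_1/2,\,c_2\}$ adjusted so that $\sigma\mu|A|\le\tfrac12 c_1\mu|A|$ and $\sigma\mu\le c_2\min(\mu|A|,1)$. The only subtlety is the factor $\min(\mu|A|,1)$ versus $\mu$ on the right side: when $|A|\ge 1$ (which holds whenever $A\ne\emptyset$; if $A=\emptyset$ the statement is vacuous since we may interpret the empty event appropriately, or simply note $0\ge 0$), we have $\min(\mu|A|,1)\ge\min(\mu,1)=\mu$, so $\cps{N\ge\sigma\mu|A|}\ge c_2\mu\ge\sigma\mu$, as required. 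The main obstacle — though it is more bookkeeping than genuine difficulty — is keeping the two regimes ($\mu|A|$ small vs.\ large) straight in the Paley--Zygmund step and ensuring the constants are chosen to make both the threshold $\sigma\mu|A|$ and the probability lower bound $\sigma\mu$ come out simultaneously with a single $\sigma=\sigma(\delta)$; the genuinely quantitative input is just the elementary one-edge estimate giving the linear-in-$\mu$ scaling.
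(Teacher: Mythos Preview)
Your proof is correct and essentially the same as the paper's: both rest on the single-edge estimate $\mathcal{P}_{\eta_0}(a\text{ open throughout }[1/2,1])\gtrsim\mu$ together with independence across edges. The paper first reduces to the worst case $\eta_0\equiv 0$ (so the count is exactly $\mathrm{Bin}(|A|,q)$ with $q=(1-e^{-\mu/2})p\,e^{-\mu(1-p)/2}\asymp\mu$) and then simply declares the conclusion ``clear'', whereas you skip that reduction and make the final step explicit via Paley--Zygmund.
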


\begin{proof}[\bf Proof]
The left hand size is minimised when $\eta_0\equiv 0$. In this case, the left hand side equals
$\pr{{\rm{Bin}}(|A|,(1-e^{-\mu/2})pe^{-\mu(1-p)/2})\ge |A|\sigma\mu}$ where ${\rm{Bin}}(m,q)$ denotes a Binomial random variable
with parameters $m$ and $q$. 
Since $\mu\le 1/2$ and $p\ge\delta$, it is clear that there exists 
a $\sigma$ satisfying the requirements.
\end{proof}

We now let $\partial_E(S)$ denote the {\em edge boundary} of $S$ which is the set of edges from $S$ to $S^c$. 
We simply write $\phi_{\eta_{[0,1]}}$ to denote $\phi_{p_{\eta_{[0,1]}},\pi}$, i.e.\ we run the environment process for time $1$ starting from $\eta_0$ and use ${p_{\eta_{[0,1]}}}$ for the transition probability of the random walk.

\begin{lemma}\label{lem:KeyIsoRequirement}
For all $d$, there exists $c_d>0$ so that for all $n$, for all $\mu\le 1/2$, for all $p$, for all $\beta$,
for all nonempty $S\subseteq \Z_n^d$ with $\pi(S)\le 1/2$,
for all $\eta_{[0,1]}$ satisfying
$$
\left|e\in \partial_E(S):\eta_t(e)=1\mbox{ for all } t\in \left[1/2,1\right]\right| \ge |\partial_E(S)|\beta,
$$
we have 
$$
\phi_{\eta_{[0,1]}}(S)\ge \frac{c_d\beta}{n(\pi(S))^{1/d}}.
$$
\end{lemma}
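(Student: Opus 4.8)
The plan is to bound the expansion $\phi_{\eta_{[0,1]}}(S) = \prcond{X_1 \notin S}{X_0 \in S}{}$ from below, where the walk runs for unit time in the (fixed) evolving environment $\eta_{[0,1]}$, starting from stationarity $\pi$ conditioned on $S$. First I would reduce to a lower bound on the probability that a walk started at a vertex $v$ on the ``inner boundary'' of $S$ (a vertex incident to some edge of $\partial_E(S)$) exits $S$ within time $1$. Since $\pi$ is uniform on $\Z_n^d$, we have $\prcond{X_1 \notin S}{X_0 \in S}{} \geq \tfrac{1}{|S|}\sum_{v \in \partial_V(S)} \prstart{X_1 \notin S}{v, \eta_{[0,1]}}$, where $\partial_V(S)$ is the set of vertices of $S$ with an edge to $S^c$. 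So it suffices to (a) lower bound $|\partial_V(S)|$ and (b) lower bound the single-vertex exit probability.

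For (a), I would invoke the discrete isoperimetric inequality on the torus $\Z_n^d$: for $\pi(S) \le 1/2$, i.e.\ $|S| \le n^d/2$, one has $|\partial_E(S)| \ge c_d' \min\{|S|^{(d-1)/d}, \dots\}$, and since $\pi(S) \le 1/2$ the relevant term gives $|\partial_E(S)| \gtrsim |S|^{(d-1)/d}$; also $|\partial_V(S)| \ge |\partial_E(S)|/(2d)$ since each vertex touches at most $2d$ edges. Writing this in terms of $\pi(S) = |S|/n^d$ yields $|\partial_V(S)|/|S| \gtrsim_d \frac{1}{n (\pi(S))^{1/d}}$, which is exactly the shape of the claimed bound — so the $n (\pi(S))^{1/d}$ in the denominator is coming from isoperimetry.

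For (b), I would use the hypothesis: a $\beta$-fraction of the edges of $\partial_E(S)$ are open throughout $[1/2,1]$. A cleaner route than working vertex-by-vertex is to go back to edges: for each ``good'' edge $e = \{u,w\} \in \partial_E(S)$ with $u \in S$, the walk started at $u$ has a uniformly positive (depending only on $d$) chance of being at $u$ at time $1/2$ and then attempting to cross $e$ in the time interval $[1/2,1]$ — since $e$ is open throughout that interval and the walk rings its clock and picks direction $e$ with rate $1/(2d)$, this contributes a constant $c_d''$. Summing $\frac{1}{|S|}\sum_{e \text{ good}} c_d'' \cdot (\text{indicator the walk exits via } e)$ and being slightly careful that these exit events for distinct good edges sharing an endpoint in $S$ are handled by, e.g., just restricting to the first good edge out of each boundary vertex (so the count is over $\gtrsim |\partial_E(S)|\beta/(2d)$ disjoint vertex-events), gives $\phi_{\eta_{[0,1]}}(S) \ge c_d'' \cdot \frac{\beta |\partial_E(S)|}{2d |S|} \gtrsim_d \frac{\beta}{n(\pi(S))^{1/d}}$.

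The main obstacle is bookkeeping rather than anything deep: one must make sure the single-edge-crossing events are combined without overcounting (distinct good edges can share the $S$-endpoint, and one good edge's two endpoints — but only one lies in $S$), and that the ``stay put until time $1/2$, then cross'' estimate is genuinely bounded below by a constant depending only on $d$, uniformly in $n$, $\mu$, $p$, and the configuration $\eta_{[0,1]}$ off the good edges. Both are routine: the holding probability of continuous-time simple random walk on $\Z_n^d$ over a unit time interval is bounded below by a $d$-dependent constant, and restricting attention to one good edge per inner-boundary vertex makes the union a disjoint union of events.
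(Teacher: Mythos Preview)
Your proof is correct and follows essentially the same route as the paper: define $S_{\rm good}\subseteq S$ as the vertices incident to a boundary edge that is open throughout $[1/2,1]$, use $|S_{\rm good}|\ge \beta|\partial_E(S)|/(2d)$ together with the torus isoperimetric inequality $|\partial_E(S)|\gtrsim_d |S|^{(d-1)/d}$, and then show that from each $v\in S_{\rm good}$ the walk exits $S$ by time $1$ with probability bounded below by a constant depending only on $d$. The only technical difference is in how you place the walker at a good vertex at time $1/2$: you simply require it to stay at its starting point $v$ on $[0,1/2]$ (probability $\ge e^{-1/2}$, uniformly in the environment), whereas the paper instead uses that the uniform distribution is stationary for every environment, so starting from uniform on $S$ the time-$1/2$ marginal is pointwise $\le 1/|S|$, giving $\prcond{X_{1/2}\in S_{\rm good}\cup S^c}{X_0\in S}{}\ge |S_{\rm good}|/|S|$. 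Both arguments then finish identically by crossing a fixed good edge (or staying put if already in $S^c$) during $[1/2,1]$.
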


\begin{proof}[\bf Proof]
Let $S_{\rm{good}}:=\{s\in S:\exists \, e \mbox{ from $s$ to $S^c$ which is open during 
$[1/2,1]$} \}$ and
let $S_{\rm{bad}}:=S\backslash S_{\rm{good}}$. (Note that $S_{\rm{good}}$ is a subset of the 
{\em internal vertex boundary} of $S$.) Note that 
$$
|S_{\rm{good}}|\ge \frac{1}{2d}\times \left|e\in \partial_E(S):\eta_t(e)=1\mbox{ for all } t\in \left[1/2,1\right]\right|
$$
and hence $|S_{\rm{good}}|\ge \frac{|\partial_E(S)|\beta}{2d}$. Since $\pi(S)\le 1/2$, by the standard 
isoperimetric inequality on $\Z_n^d$, we have that $|\partial_E(S)|\ge c'_d |S|^{(d-1)/d}$ for some 
universal constant $c'_d$ only depending on $d$. It follows that 
\begin{equation}\label{eq:BoundOnSgood}
|S_{\rm{good}}|\ge \frac{c'_d}{2d} |S|^{(d-1)/d}\beta.
\end{equation}
Consider now
\[
\phi_{{\eta_{[0,1]}}}(S)=\prcond{X_{1}\not \in S}{X_0\in S}{\eta_{[0,1]}}.
\]
The subscript $\eta_{[0,1]}$ means that the environment is fixed to be this realisation.
The conditioning $X_0\in S$ gives probability $1/|S|$ to each point in $S$. Since the uniform distribution is stationary for all realisations of the environment by the definition of the random walk, we infer that \begin{equation}\label{eq:BoundOnMarginal}
\max_{y\in \Z_n^d}\prcond{X_{\frac{1}{2}}=y}{X_0\in S}{{\eta_{[0,1]}}}\le \frac{1}{|S|}.
\end{equation}
Now 
\begin{align}\label{eq:Product}
\begin{split}
\prcond{X_{1}\not \in S}{X_0\in S}{{\eta_{[0,1]}}}\ge & 
\prcond{X_{\frac{1}{2}}\in S_{\rm{good}}\cup S^c}{X_0\in S}{{\eta_{[0,1]}}}\\
&\times
\prcond{X_{1}\not \in S}{X_0\in S,X_{\frac{1}{2}}\in S_{\rm{good}}\cup S^c}{{\eta_{[0,1]}}}.
\end{split}
\end{align}
By~\eqref{eq:BoundOnMarginal}, the first factor on the right hand side 
is at least $1-\frac{|S_{\rm{bad}}|}{|S|}$. This equals 
$\frac{|S_{\rm{good}}|}{|S|}$, which, by~\eqref{eq:BoundOnSgood}, is at least
$\frac{c'_d}{2d} |S|^{-1/d}\beta=\frac{c'_d}{2d} (\pi(S))^{-1/d}n^{-1}\beta$.
For the second term, if $X_{\frac{1}{2}} \in S_{\rm{good}}$, we fix
 an arbitrary edge
$e$ from  $X_{\frac{1}{2}}$ to $S^c$ which is open during $[1/2,1]$.
The probability that the random walk attempts only one jump during
$[\frac{1}{2},1]$ and the attempted jump is along this edge is at least a constant
$\gamma=\gamma(d)>0$ only depending upon $d$. On the other hand, if $X_{\frac{1}{2}}\in S^c$, there is a fixed probability the walk does not move, which we can also take to be $\gamma(d)$.

This gives that the left hand side of~\eqref{eq:Product} is at least $\frac{\gamma(d)c'_d}{2d} (\pi(S))^{-1/d}n^{-1}\beta$. Letting 
$c(d):=\frac{\gamma(d)c'_d}{2d}$ yields the claim.
\end{proof}

\begin{proof}[\bf Proof of Theorem~\ref{thm:QuenchedMixingTime}]

	We will apply Theorem~\ref{thm:QuenchedMixingTimeGeneral} with $\EE$ being the space of right continuous paths with left limits on $[0,1]$ and $Z_k=X_{k}$ as defined earlier. Observe that 
$$
\tx{\eta}\leq \tx{(\overline{\eta}_k)_{k\ge 0})}.
$$
We now show that 
for all $d\ge 1$ and $\delta>0$, there exists 
$C_1=C_1(d,\delta)>0$ so that for all $p\in [\delta,1]$, for all $\mu \le 1/2$,
for all $n$ and for all $\eta_0\in\{0,1\}^{E(\Z_n^d)}$,
\begin{align}\label{eq:goal}
\cps{\phi_{{\overline{\eta}}_1}(S)\ge \frac{C_1\mu}{n(\pi(S))^{1/d}}}\ge C_1\mu.
\end{align}
Fix $d$ and $\delta$. Choose $\sigma(\delta)$ 
from Lemma~\ref{lem:Binomial} and $c_d$ from Lemma~\ref{lem:KeyIsoRequirement}.
Fix $S$ with $\pi(S)\le 1/2$. Combining Lemmas~\ref{lem:Binomial} and~\ref{lem:KeyIsoRequirement}
with $A$ in Lemma~\ref{lem:Binomial} taken to be $\partial_E(S)$
and~$\beta$ in~\ref{lem:KeyIsoRequirement} taken to be~$\sigma\mu$, the two lemmas imply that 
$$
\cps{\phi_{\eta_{[0,\frac{1}{\mu}]}}(S)\ge \frac{c_d\sigma \mu}{n(\pi(S))^{1/d}}}\ge \sigma\mu,
$$
establishing~\eqref{eq:goal}.
From~\eqref{eq:goal} we now get that for all sets $S$ with $\pi(S)\leq 1/2$
\begin{align*}
\phi(\eta_0,S) \geq  \frac{C_1^2\mu^2}{n(\pi(S))^{1/d}},
\end{align*}
and hence 
\begin{align*}
\int_{4\pi(x)}^{4/\epsilon} \frac{du}{u\phi^2(u)} = \int_{4/n^d}^{1/2} \frac{du}{u\phi^2(u)} + \frac{1}{\phi^2(1/2)} \int_{1/2}^{4/\epsilon} \frac{du}{u} \leq C_2\left(\frac{n}{\mu^2}\right)^2 \log\left(\frac{1}{\epsilon}\right),
\end{align*}
where $C_2$ is a positive constant. 
Since for any environment $\eta$ and any $x\in \Z_n^d$ we have
\[
\prcond{X_{1}=x}{X_{0}=x}{\eta} \geq \frac{1}{e},
\] 
applying Theorem~\ref{thm:QuenchedMixingTimeGeneral} completes the proof.
\end{proof}

We turn to prove Theorem~\ref{thm:hittingtimes}. We now let $\til{\calE}:= D_{[0,1/\mu]}(\{0,1\}^{E(\Z_n^d)})$ be the space of right 
continuous paths with left limits from $[0,1/\mu]$ into $\{0,1\}^{E(\Z_n^d)}$. 
Let $\overline{\eta}_k:= \eta_{[\frac{k-1}{\mu},\frac{k}{\mu}]}$
and $Z_k:= X_{\frac{k}{\mu}}$. Then again
$((\overline{\eta}_k,Z_k))_{k\ge 0}$ is a Markov chain in a Markovian evolving environment.
It is clear that $(\overline{\eta}_k)_{k\ge 0}$ 
is a Markov chain with state space $\til{\calE}$ and that for all $\zeta\in\til{\calE}$, the corresponding 
Markov chain $p_\zeta$ simply corresponds to doing random walk on $\Z_n^d$ for time $1/\mu$
during which time the bond configuration evolution is fixed to be $\zeta$.

The following lemma follows in exactly the same way as Lemma~\ref{lem:Binomial}.

\begin{lemma}\label{lem:Binomial1}
For all $\delta>0$, there exists $\sigma=\sigma(\delta)>0$ so that 
for all $d\ge 1$, for all $n$, for all $\mu\le 1/2$, for all $p\in [\delta,1]$,
for all $A\subseteq  E(\Z_n^d)$ and for all $\eta_0$,
$$
\cps{\left|a\in A:\eta_t(a)=1\mbox{ for all } t\in \left[\frac{1}{\mu}-1,\frac{1}{\mu}\right]\right| \ge |A|\sigma}\ge \sigma.
$$
\end{lemma}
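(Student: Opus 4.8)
The plan is to repeat the proof of Lemma~\ref{lem:Binomial} essentially verbatim; the only structural difference is that the window $[\tfrac1\mu-1,\tfrac1\mu]$ now sits at the end of a time interval of length $1/\mu$ rather than at the end of a unit interval, so each edge has enough time to become open and the per-edge success probability is bounded below by a constant depending only on $\delta$. This is precisely why the factor $\mu$ that appears in the conclusion of Lemma~\ref{lem:Binomial} is absent here.

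First I would argue that the left-hand side is minimised at $\eta_0\equiv 0$. The event whose probability we are bounding is increasing in the trajectory $(\eta_t)_{0\le t\le 1/\mu}$ (with respect to the coordinatewise partial order), and dynamical percolation admits the standard monotone graphical coupling: one rate-$\mu$ Poisson clock per edge, and at each ring an independent uniform mark that sets the edge open or closed according to the threshold $p$, which preserves the partial order of the initial configurations. Hence enlarging $\eta_0$ can only increase the probability, and it suffices to treat $\eta_0\equiv 0$.

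Next, for $\eta_0\equiv 0$ and a fixed edge $a\in A$, I would lower bound the probability $q_0$ that $\eta_t(a)=1$ for all $t\in[\tfrac1\mu-1,\tfrac1\mu]$. Conditioning on the state of $a$ at time $\tfrac1\mu-1$ and then on the absence of a refresh to the closed state during the subsequent unit window gives
\[
q_0 \;=\; p\bigl(1-e^{-(1-\mu)}\bigr)\,e^{-(1-p)\mu}\;\ge\;\delta\bigl(1-e^{-1/2}\bigr)e^{-1/2}\;=:\;q(\delta)\;>\;0,
\]
using $p\ge\delta$ and $\mu\le\tfrac12$ (so $1-\mu\ge\tfrac12$ and $(1-p)\mu\le\tfrac12$); note that $q(\delta)$ does not depend on $n$ or $\mu$. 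Since distinct edges evolve independently, the cardinality appearing in the statement stochastically dominates $\bin(|A|,q(\delta))$, so the left-hand side is at least $\mathbb{P}\bigl(\bin(|A|,q(\delta))\ge|A|\sigma\bigr)$.

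Finally I would choose $\sigma=\sigma(\delta)>0$ small enough that $\mathbb{P}\bigl(\bin(|A|,q(\delta))\ge|A|\sigma\bigr)\ge\sigma$ for every $|A|\ge 1$. When $|A|\sigma\le 1$ the event contains $\{\bin(|A|,q(\delta))\ge 1\}$, whose probability is $1-(1-q(\delta))^{|A|}\ge q(\delta)$; when $|A|\sigma>1$, i.e.\ $|A|>1/\sigma$, a Chernoff lower-tail bound gives probability at least $1-e^{-c(\delta)|A|}\ge 1-e^{-c(\delta)/\sigma}$, which exceeds $\sigma$ once $\sigma$ is small. Any $\sigma$ meeting both constraints works. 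The only point that needs a little care is making this binomial estimate uniform over all sizes $|A|$ — the regime where $|A|$ is too small for concentration versus the regime where it is large — but the two elementary bounds above handle both, so I anticipate no real obstacle; as the text already notes, the lemma follows exactly as Lemma~\ref{lem:Binomial}.
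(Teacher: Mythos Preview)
Your proof is correct and follows essentially the same approach as the paper: reduce to $\eta_0\equiv 0$ by monotonicity, identify the count as $\bin(|A|,q_0)$ with $q_0=p(1-e^{\mu-1})e^{-(1-p)\mu}\ge q(\delta)>0$, and then pick $\sigma$ small enough. The paper simply asserts the last step is ``clear''; your added case analysis is a welcome elaboration rather than a different argument.
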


\begin{proof}[\bf Proof]
As before, the left hand size is minimised when $\eta_0\equiv 0$. In this case, the left hand side equals
$\pr{{\rm{Bin}}(|A|,(1-e^{\mu-1})pe^{-\mu(1-p)})\ge |A|\sigma}$ where ${\rm{Bin}}(m,q)$ denotes a Binomial random variable
with parameters $m$ and $q$. 
Since $\mu\le 1/2$ and $p\ge\delta$, it is clear that there exists 
a $\sigma$ satisfying the requirements.
\end{proof}

\begin{proof}[\bf Proof of Theorem~\ref{thm:hittingtimes}]

In order to prove the theorem we first consider a lazy version of the Markov chain $((\overline{\eta}_k,Z_k))_{k\geq 0}$ as follows. At every step the walk remains in place with probability $1/2$ and with probability $1/2$ it jumps according to the transition matrix given by the environment at this time. When $X$ stays in place, then the environment at the next step also stays in place, otherwise it moves according to its transition matrix. This is the setup of Remark~\ref{rem:variant}.

For this new chain the statement of Lemma~\ref{lem:KeyIsoRequirement} remains the same with an extra factor of $1/2$ in the lower bound for $\phi_{\eta_{[0,1/\mu]}}$. Also Lemma~\ref{lem:Binomial1} still holds with an extra factor of $1/2$ again in the lower bound of the probability.

Remark~\ref{rem:variant} now shows that the statement of  Theorem~\ref{thm:QuenchedMixingTimeGeneral} remains true, and hence we  obtain 
\begin{align}\label{eq:mixqu}	
\max_{\eta_0}\cps{\eta=(\eta_t)_{t\geq 0}: \, \tmix{x}{\eta}\geq \frac{Cn^2 \log(1/\epsilon)}{\mu}}\leq \epsilon,
\end{align}
where the mixing time refers to the lazy version of the discretised random walk.	 By letting $\epsilon=\epsilon/n^d$ and taking a union bound over all $x\in \Z_n^d$  we obtain
\[
\max_{\eta_0}\cps{\eta=(\eta_t)_{t\geq 0}: \, \tx{\eta}\geq \frac{Cn^2 \log(n/\epsilon)}{\mu}}\leq \epsilon.
\]
Using this, we can obtain an upper bound of the same order for the hitting time of any set $A\subseteq \Z_n^d$ with $|A|\geq n^d/2$ by looking at disjoint intervals of length $Cn^2\log(n/\epsilon)/\mu$. Since a lazy chain is delayed by a factor of~$2$, this finally shows that (for a different constant $C$)
\[
\max_{\eta_0}\cps{\eta=(\eta_t)_{t\geq 0}: \, \max_x\estart{\tau_A}{x,\eta}\geq \frac{Cn^2\log (n/\epsilon)}{\mu}}\leq \epsilon.
\]
Using~\eqref{eq:mixqu} for $\epsilon=1/4$ and performing independent experiments immediately gives that 
\[
\max_{x,\eta_0}\estart{\tau_A}{x,\eta_0}\leq \frac{Cn^2}{\mu}
\]
and this concludes the proof. 
\end{proof}

\section{Evolving Sets} \label{sec:EvolvingSets}

In this section, we derive the theory of evolving sets in a Markovian random environment 
in order to prove Theorem~\ref{thm:QuenchedMixingTimeGeneral}. 

We first recall the definition of evolving sets in the context of a finite state Markov chain;
see \cite{LevPerWil}. Given a Markov chain $p(x,y)$ with state space $\Omega$ and a stationary 
distribution $\pi$, the corresponding evolving-set process $\{S_n\}_{n\ge 0}$ is a Markov chain
on subsets of $\Omega$ whose
transitions are described as follows. Let $Q$ be defined as in (\ref{eq:DefnQ}) (with $\nu$ being
$\pi$) and let $U$ be
a uniform random variable on $[0,1]$. If $S\subseteq \Omega$ is the present state, we let the next 
state $\til{S}$ be defined by
$$
\til{S}:=\left\{y\in \Omega: \frac{Q(S,y)}{\pi(y)}\ge U\right\}.
$$
Note that $\Omega$ and $\emptyset$ are absorbing states and it is immediate to check that
\begin{equation}\label{eq:ES1d}
\prcond{y\in S_{k+1}}{S_k}{}=\frac{Q(S_k,y)}{\pi(y)}.
\end{equation}
Moreover, one can describe the evolving set process as that process on subsets which 
satisfies the ``one-dimensional marginal'' condition~\eqref{eq:ES1d} and where these 
different events, as we vary $y$, are maximally coupled.

For later use we also define now $$
\psi_{p}(S):=1-\E{\sqrt{\frac{\pi(\til{S})}{\pi(S)}}},
$$
where $\til{S}$ is the first step of the evolving set process started from $S$ and when the transition probability for the Markov chain is $p$ and the stationary distribution $\pi$.

We next define completely analogously the evolving set process in the context 
of a time inhomogeneous Markov chain. Consider a time inhomogeneous Markov chain with state space
$\S$ whose transition matrix for moving from time $k$ to time $k+1$ 
is given by $p_{k+1}(x,y)$ where we assume that the 
probability measure $\pi$ is a stationary distribution for each $p_k$. 
In this case, we say that~$\pi$ is a stationary distribution for the inhomogeneous Markov chain.
Let $Q_k$ be defined 
as in (\ref{eq:DefnQ}) but with respect to $p_k$ and $\pi$. We then obtain a time 
inhomogeneous Markov chain $S_0,S_1,\ldots$ on subsets of $\S$ generated by 
$$
{S}_{k+1}:=\left\{y\in \S: \frac{Q_{k+1}(S_k,y)}{\pi(y)}\ge U_{k+1}\right\}
$$
where $(U_i)_{i}$ are i.i.d.\ random variables uniform on $[0,1]$. We call this the evolving set process
with respect to $p_1,p_2,\ldots$ and $\pi$. 


We now need to consider the Doob transform of the evolving set process. If $P_\zeta$ is the transition probability for the evolving set process when the environment is $\zeta$, then we define the Doob transform via
\[
\widehat{P}_\zeta(S,S') = \frac{\pi(S')}{\pi(S)} P_\zeta(S,S').
\] 

We now let $\psi(\zeta,S):=\estart{\psi_{p_{\eta_1}}(S)}{\zeta}$. For $r\in [\pi_\star,\frac{1}{2}]$, we let
$$
\psi(r):=\inf\{\psi(\zeta,S):\zeta\in\calE, \pi(S)\le r\}
$$
and $\psi(r):=\psi(\frac{1}{2})$ for $r\ge \frac{1}{2}$.

In the following, we let 
\[ 
S^{\#}:= \left\{ 
\begin{array}{ccl}
S & \mbox{ if } \pi(S)\le \frac{1}{2} \\
S^c & \mbox{ otherwise }
\end{array} \right.
\]
and 
$$
Z_n:=\frac{\sqrt{\pi(S^{\#}_n)}}{\pi(S_n)}.
$$
 
\begin{lemma}\label{lem:zprocess}
Let $\epsilon>0$ and $x\in \S$. If 
\[
n\geq \int_{4\pi(x)}^{4/\epsilon} \frac{du}{u\psi(u)},
\]
then $\estarth{Z_n}{\{x\},\eta_0}\leq \sqrt{\epsilon}$ for all $\eta_0$.
\end{lemma}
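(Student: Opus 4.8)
The plan is to show that $\widehat{\mathbb{E}}[Z_{k+1} \mid S_k] \le (1 - \psi(\zeta, S_k^{\#}))\, Z_k$ when the environment moves from $\zeta$, and then to convert this one-step contraction into a bound on $\widehat{\mathbb{E}}[Z_n]$ by the usual Grönwall/Lyapunov argument adapted to the environment-dependent rate. The key observation, which is standard in evolving-set theory (see \cite{LevPerWil}), is that the Doob transform $\widehat{P}$ of the evolving set process equals the law of the evolving set process itself reweighted by $\pi(S')/\pi(S)$, so that for any function $f$,
\[
\widehat{\mathbb{E}}_\zeta[f(S_{k+1}) \mid S_k] = \frac{1}{\pi(S_k)} \, \mathbb{E}_\zeta\!\left[ \pi(S_{k+1}) f(S_{k+1}) \mid S_k \right].
\]
Applying this with $f(S) = \sqrt{\pi(S^{\#})}/\pi(S)$ and using that $\pi(S_{k+1}) \cdot \sqrt{\pi(S_{k+1}^{\#})}/\pi(S_{k+1}) = \sqrt{\pi(S_{k+1}^{\#})} \le \sqrt{\pi(S_{k+1})}$ when $\pi(S_{k+1}) \le 1/2$ (and symmetrically otherwise, using $\pi(S^{\#}) \le \pi(S^c_{k+1})$), one reduces to controlling $\mathbb{E}_\zeta[\sqrt{\pi(S_{k+1}^{\#})} \mid S_k]$. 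A short monotonicity argument shows $\mathbb{E}_\zeta[\sqrt{\pi(S_{k+1}^{\#})} \mid S_k = S] \le \mathbb{E}_\zeta[\sqrt{\pi(\widetilde S)} \mid S_k = S^{\#}]$, where $\widetilde S$ is one step of the (un-transformed) evolving-set process from $S^{\#}$; then by the very definition of $\psi_{p_{\eta_1}}$ and of $\psi(\zeta, \cdot)$, averaging over the next environment $\eta_1 \sim R(\zeta, \cdot)$ gives
\[
\widehat{\mathbb{E}}_\zeta[Z_{k+1} \mid S_k] \le \big(1 - \psi(\zeta, S_k^{\#})\big) Z_k \le \big(1 - \psi(\pi(S_k^{\#}))\big) Z_k,
\]
the last step using $\pi(S_k^{\#}) \le 1/2$ and the definition of $\psi(r)$ as an infimum.

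With this contraction in hand, I would run the standard argument that turns a multiplicative one-step decay into a hitting-time-type integral bound. Write $f_k := \pi(S_k^{\#})$; on the event that the chain has not yet been absorbed, $f_k$ takes values in $[\pi_\star, 1/2]$, and the contraction together with $Z_k = \sqrt{f_k}/\pi(S_k) \ge \sqrt{f_k}$ shows that $\widehat{\mathbb{E}}[Z_{k+1}] \le \widehat{\mathbb{E}}[Z_k] - \widehat{\mathbb{E}}[\psi(f_k) Z_k]$, and since $\psi$ is weakly decreasing while $Z_k \ge \sqrt{f_k}$ one gets $\psi(f_k) Z_k \ge$ (an expression one can compare, via $Z_k \ge 4 f_k$-type bookkeeping keeping track of the $1/\pi(S_k)$ factor, to $\psi(\cdot)$ evaluated appropriately). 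The clean way to package this is the change of variables already signalled by the statement: the condition $n \ge \int_{4\pi(x)}^{4/\epsilon} du/(u\psi(u))$ is exactly what is needed so that, starting from $Z_0 = \sqrt{\pi(\{x\}^{\#})}/\pi(\{x\}) = 1/\sqrt{\pi(x)}$ (for $\pi(x) \le 1/2$), the decreasing sequence $\widehat{\mathbb{E}}[Z_k]$ reaches $\sqrt{\epsilon}$ by time $n$. Concretely, one shows that if $\widehat{\mathbb{E}}[Z_k] = z$ then $\widehat{\mathbb{E}}[Z_{k+1}] \le z(1 - \psi(4/z^2))$ or a similar inequality linking the current value of the Lyapunov function to the argument of $\psi$, and summing the resulting telescoping inequality against $\int du/(u\psi(u))$ yields the claim.

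The main obstacle I anticipate is the bookkeeping that relates the \emph{set} $S_k$ (and its measure $\pi(S_k)$, which can be small) to the Lyapunov value $Z_k$ and hence to the correct argument $u$ at which to evaluate $\psi$: one must be careful that $\psi$ is applied at a scale comparable to $\pi(S_k^{\#})$, not to $\pi(S_k)$, and the factors of $4$ in the integral limits come from absorbing the discrepancy between $Z_k$ and $\sqrt{\pi(S_k^{\#})}$ (a factor which is at most $1/\pi(S_k) \le 1/(2\pi(S_k^{\#}))$ is not uniformly bounded, so one genuinely needs the Doob transform to do the work of keeping $Z_k$ from blowing up). A secondary technical point is handling the $\pi(S) > 1/2$ branch in the definition of $S^{\#}$ and checking the contraction still holds there; this is routine but must be stated. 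Modulo these points, the argument is the verbatim analogue of the classical evolving-sets mixing bound, with the single change that the per-step conductance $\psi_{p}$ is replaced by its environment-average $\psi(\zeta, \cdot)$ and then by the worst-case profile $\psi(r)$ — which is legitimate precisely because, as emphasised in the text, the minimisation over $S$ is taken outside the expectation over the next environment.
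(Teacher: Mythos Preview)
Your approach is essentially the same as the paper's: you correctly derive the one-step contraction
\[
\widehat{\mathbb{E}}\!\left[\frac{Z_{k+1}}{Z_k}\,\middle|\,S_k,\eta_0,\ldots,\eta_k\right]\le 1-\psi(\pi(S_k^{\#})),
\]
handling the two cases $\pi(S_k)\le 1/2$ and $\pi(S_k)>1/2$ via the duality $(S_n^c)$ of the evolving-set process, exactly as the paper does. The paper then packages this as $\widehat{\mathbb{E}}[Z_{n+1}\mid Z_n]\le Z_n(1-f_0(Z_n))$ with $f_0(z)=\psi(1/z^2)$ and invokes \cite[Lemma~11~(iii)]{MorrisPeres} directly to obtain the integral bound.

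The only gap in your write-up is precisely this last step. Your claim ``if $\widehat{\mathbb{E}}[Z_k]=z$ then $\widehat{\mathbb{E}}[Z_{k+1}]\le z(1-\psi(4/z^2))$'' does not follow from Jensen (the map $z\mapsto z(1-f_0(z))$ need not be concave), and the surrounding discussion of ``$Z_k\ge\sqrt{f_k}$'' and ``$1/\pi(S_k)\le 1/(2\pi(S_k^{\#}))$'' is muddled (the latter inequality is false when $\pi(S_k)\le 1/2$). The correct route is exactly the deterministic comparison argument of Morris--Peres Lemma~11, which uses only that $f_0$ is nondecreasing; once you cite or reproduce that lemma, your proof and the paper's coincide.
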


\begin{proof}[\bf Proof]
We fix $x,\eta_0$ and 
to simplify notation we do not include them in the notation.
We now get that almost surely
\begin{align*}
\econdh{\frac{Z_{n+1}}{Z_n}}{S_n,\eta_0,\ldots,\eta_n} = \econd{\frac{\pi(S_{n+1})}{\pi(S_n)} \cdot \frac{Z_{n+1}}{Z_n}}{S_n,\eta_0,\ldots,\eta_n} = \econd{\sqrt{\frac{\pi(S_{n+1}^{\#})}{\pi(S_n^{\#})}}}{S_n,\eta_0,\ldots, \eta_n}.
\end{align*}
Suppose first that $\pi(S_n)\leq 1/2$. Then 
\begin{align*}
\econd{\sqrt{\frac{\pi(S_{n+1}^{\#})}{\pi(S_n^{\#})}}}{S_n,\eta_0,\ldots, \eta_n} \leq \econd{\sqrt{\frac{\pi(S_{n+1})}{\pi(S_n)}}}{S_n,\eta_0,\ldots, \eta_n}.
\end{align*}
The Markov property of the environment now gives
\begin{align}\label{eq:set}
\begin{split}
\econd{\sqrt{\frac{\pi(S_{n+1})}{\pi(S_n)}}}{S_n,\eta_0,\ldots, \eta_n} &= \sum_{\eta}R(\eta_n,\eta) (1-\psi_{p_\eta}(S_n))\\& = 1 - \psi(\eta_n,S_n)\leq 1-\psi(\pi(S_n)).
\end{split}
\end{align}
Suppose next that $\pi(S_n)>1/2$. Then 
\begin{align*}
\econd{\sqrt{\frac{\pi(S_{n+1}^{\#})}{\pi(S_n^{\#})}}}{S_n,\eta_0,\ldots, \eta_n} \leq \econd{\sqrt{\frac{\pi(S^c_{n+1})}{\pi(S^c_n)}}}{S_n,\eta_0,\ldots, \eta_n}
\end{align*}
and using the Markov property of the environment as before (as well as the fact that $(S_n^c)_n$ is also an evolving set process) we obtain
\begin{align}\label{eq:complement}
\econd{\sqrt{\frac{\pi(S^c_{n+1})}{\pi(S^c_n)}}}{S_n,\eta_0,\ldots, \eta_n} \leq 1-\psi(\pi(S^c_n)).
\end{align}
Since the function $\psi$ is non-increasing, it follows that if $\pi(S_n)>1/2$, then $\psi(\pi(S_n^c))\geq \psi(\pi(S_n))$, and hence from~\eqref{eq:set} and~\eqref{eq:complement} we get that in all cases
\begin{align*}
\econd{\sqrt{\frac{\pi(S_{n+1}^{\#})}{\pi(S_n^{\#})}}}{S_n,\eta_0,\ldots, \eta_n} \leq 1-\psi(\pi(S_n)) = 1-f_0(Z_n),
\end{align*}
where following~\cite{MorrisPeres} we set $f_0(z): = \psi(1/z^2)$ which is non-decreasing. (Note that $\pi(S_n)=Z_n^{-2}$ when $Z_n\geq \sqrt{2}$, i.e.\ when $\pi(S_n)\geq 1/2$ and $\psi(x)=\psi(1/2)$ for $x\geq 1/2$.) Therefore, we conclude
\[
\econdh{Z_{n+1}}{Z_n}\leq Z_n(1-f_0(Z_n))
\]
and hence using~\cite[Lemma~11 (iii)]{MorrisPeres} we get that $\estarth{Z_n}{}\leq \sqrt{\epsilon}$ for all 
\[
n\geq \int_{4\pi(x)}^{4/\epsilon} \frac{du}{u\psi(u)}
\]
and this finishes the proof.
\end{proof}

\begin{remark}\label{rem:newproof}
\rm{
We now explain the changes in the proof of the lemma above needed to justify Remark~\ref{rem:variant}. The matrix $R$ is replaced by $R(\zeta,\ell, \eta)$, where $\ell \in \{0,1\}$ depending on whether the walk made an actual step or not, i.e.\ $R(\zeta,0,\eta)=\1(\zeta=\eta)$ and $R(\zeta,1,\eta)=R(\zeta,\eta)$. In the definition of $\psi$, the matrix $p_{\eta}$ is replaced by $(p_{\eta}+I)/2$. The proof of the lemma above then remains unchanged with this new notation.
}	
\end{remark}

\begin{lemma}\label{lem:Lemma17.12}
If $(S_k)_{k\ge 0}$ is the evolving set process relative to an inhomogeneous Markov chain~$(X_k)$
and stationary distribution $\pi$, then
$$
\prstart{X_k=y}{x}=\frac{\pi(y)}{\pi(x)}\prstart{y\in S_k}{\{x\}}.
$$
\end{lemma}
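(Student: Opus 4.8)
The statement to prove is Lemma~\ref{lem:Lemma17.12}: that for an inhomogeneous Markov chain with stationary distribution $\pi$ and associated evolving-set process $(S_k)$,
\[
\prstart{X_k=y}{x}=\frac{\pi(y)}{\pi(x)}\prstart{y\in S_k}{\{x\}}.
\]

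My plan is to prove this by induction on $k$, following the classical computation for evolving sets (as in Levin--Peres--Wilmer) but keeping careful track of the time-dependence. The base case $k=0$ is immediate: $S_0=\{x\}$ with probability one, so $\prstart{y\in S_0}{\{x\}}=\1(y=x)$, and $\prstart{X_0=y}{x}=\1(y=x)$, so both sides agree (the ratio $\pi(y)/\pi(x)$ equals $1$ when $y=x$ and the indicator kills the rest).

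For the inductive step, I would assume the identity holds at time $k$ and compute $\prstart{X_{k+1}=z}{x}$ by conditioning on $X_k$:
\[
\prstart{X_{k+1}=z}{x}=\sum_y \prstart{X_k=y}{x}\, p_{k+1}(y,z).
\]
Applying the inductive hypothesis gives $\sum_y \frac{\pi(y)}{\pi(x)}\prstart{y\in S_k}{\{x\}} p_{k+1}(y,z)$. Now I would rewrite $\prstart{y\in S_k}{\{x\}} = \estart{\1(y\in S_k)}{\{x\}}$, pull the sum over $y$ inside the expectation, and recognize $\sum_y \1(y\in S_k)\pi(y)p_{k+1}(y,z) = \sum_{y\in S_k}\pi(y)p_{k+1}(y,z) = Q_{k+1}(S_k,z)$. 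Thus the expression becomes $\frac{1}{\pi(x)}\estart{Q_{k+1}(S_k,z)}{\{x\}}$. The final step is to use the defining one-dimensional marginal property of the evolving-set process, namely $\prcond{z\in S_{k+1}}{S_k}{} = Q_{k+1}(S_k,z)/\pi(z)$ (the inhomogeneous analogue of~\eqref{eq:ES1d}), so $\estart{Q_{k+1}(S_k,z)}{\{x\}} = \pi(z)\,\estart{\prcond{z\in S_{k+1}}{S_k}{}}{\{x\}} = \pi(z)\,\prstart{z\in S_{k+1}}{\{x\}}$ by the tower property. Substituting back yields $\prstart{X_{k+1}=z}{x} = \frac{\pi(z)}{\pi(x)}\prstart{z\in S_{k+1}}{\{x\}}$, completing the induction.

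There is no real obstacle here — the argument is a routine induction and is essentially identical to the time-homogeneous case in the literature. The only point requiring minor care is to make sure the conditional expectation $\estart{\prcond{z\in S_{k+1}}{S_k}{}}{\{x\}}$ is handled correctly: since $S_{k+1}$ depends on $S_k$ and the fresh uniform $U_{k+1}$, one conditions on $\sigma(S_0,\dots,S_k)$, uses the one-step marginal identity, and then takes the outer expectation over the trajectory of the evolving set started from $\{x\}$. Everything else is linearity of expectation and interchanging a finite sum with an expectation, which is unproblematic since $\S$ is finite.
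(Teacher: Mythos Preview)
Your proof is correct and is exactly the standard induction argument from Levin--Peres--Wilmer that the paper invokes; the paper simply says the homogeneous proof goes through verbatim, and you have written out that verbatim argument with the time-index on $p_{k+1}$ and $Q_{k+1}$ made explicit.
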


\begin{proof}[\bf Proof]
In the case of a homogeneous Markov chain, this is Lemma 17.12 in \cite{LevPerWil}.
The proof for the inhomogeneous case goes through verbatim.
\end{proof}

\begin{lemma}\label{lem:Lemma17.13}
If $\{S_k\}_{k\ge 0}$ is the evolving set process relative to an inhomogeneous Markov chain
with stationary distribution $\pi$, then starting from any initial state,
$\{\pi(S_k)\}_{k\ge 0}$ is a martingale.
\end{lemma}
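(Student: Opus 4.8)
The plan is to mimic the standard proof that $\{\pi(S_k)\}_{k\geq 0}$ is a martingale for the evolving set process of a homogeneous chain (Lemma 17.13 in \cite{LevPerWil}), and verify that nothing breaks in the inhomogeneous setting. The key structural fact is the ``one-dimensional marginal'' identity: conditionally on $S_k$ (and, in the inhomogeneous case, on being at step $k$ so that $p_{k+1}$ is the relevant transition matrix), we have $\prcond{y\in S_{k+1}}{S_k}{} = Q_{k+1}(S_k,y)/\pi(y)$, exactly as in~\eqref{eq:ES1d}. This holds by the very definition of $S_{k+1}$ via the uniform random variable $U_{k+1}$: the event $\{y\in S_{k+1}\}$ is $\{U_{k+1}\le Q_{k+1}(S_k,y)/\pi(y)\}$, which has the stated probability since $Q_{k+1}(S_k,y)/\pi(y)\le 1$ (because $\pi$ is stationary for $p_{k+1}$, so $\sum_{x}\pi(x)p_{k+1}(x,y)=\pi(y)$).

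Given this, the computation is short. First I would condition on $S_k$ and write
\[
\econd{\pi(S_{k+1})}{S_k} = \econd{\sum_{y\in\S}\pi(y)\1(y\in S_{k+1})}{S_k} = \sum_{y\in\S}\pi(y)\,\prcond{y\in S_{k+1}}{S_k}{}.
\]
Substituting the one-dimensional marginal gives $\sum_y \pi(y)\cdot \frac{Q_{k+1}(S_k,y)}{\pi(y)} = \sum_{y}Q_{k+1}(S_k,y)$. Then by the definition~\eqref{eq:DefnQ} of $Q_{k+1}$ (with $\nu=\pi$),
\[
\sum_{y\in\S}Q_{k+1}(S_k,y) = \sum_{y\in\S}\sum_{x\in S_k}\pi(x)p_{k+1}(x,y) = \sum_{x\in S_k}\pi(x)\sum_{y\in\S}p_{k+1}(x,y) = \sum_{x\in S_k}\pi(x) = \pi(S_k),
\]
using that each row of $p_{k+1}$ sums to $1$. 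Hence $\econd{\pi(S_{k+1})}{S_k}=\pi(S_k)$, and since $\{S_k\}$ is Markov, conditioning on the full history $S_0,\dots,S_k$ gives the same value; thus $\{\pi(S_k)\}_{k\geq 0}$ is a martingale with respect to its natural filtration (and indeed with respect to the filtration generated by the $U_i$'s).

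There is essentially no obstacle here: the only point requiring a word of care is that the transition matrix used at step $k\to k+1$ varies with $k$, but the argument only ever uses (i) stationarity of $\pi$ under \emph{each} $p_k$, which is assumed, and (ii) that each $p_k$ is stochastic. Both the boundedness $Q_{k+1}(S_k,y)/\pi(y)\le 1$ and the row-sum identity survive verbatim. For completeness I would also remark that in the random-environment version (the chain $p_{\eta_{k+1}}$), the same proof applies conditionally on the environment sequence $(\eta_i)$, since for a \emph{fixed} realisation of the environment the chain is simply an inhomogeneous chain of the type just treated; one then notes $\pi$ is stationary for every $p_\zeta$ by hypothesis. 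So the proof is: invoke Lemma~17.13 of \cite{LevPerWil} for the homogeneous case and observe that the displayed three-line computation goes through unchanged when $p$ is replaced by $p_{k+1}$.
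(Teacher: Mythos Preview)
Your proposal is correct and follows essentially the same approach as the paper: the paper's proof simply cites Lemma~17.13 in \cite{LevPerWil} for the homogeneous case and notes that the argument goes through verbatim, while you have spelled out that verbatim computation explicitly. The only difference is that you provide more detail than the paper does.
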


\begin{proof}[\bf Proof]
In the case of a homogeneous Markov chain, this is Lemma 17.13 in \cite{LevPerWil}.
The proof for the inhomogeneous case goes through verbatim.
\end{proof}

\begin{lemma}\label{lem:l2}
For all fixed environments $\eta=(\eta_i)_{i\geq 0}$ and all $x\in \S$ we have
\[
\chi\left(\prstart{Y_n=\cdot}{x,\eta}, \pi\right) \leq \estarth{Z_n}{x,\eta}.
\]
\end{lemma}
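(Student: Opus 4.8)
\textbf{Proof proposal for Lemma~\ref{lem:l2}.}

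The plan is to combine the two identities from Lemmas~\ref{lem:Lemma17.12} and~\ref{lem:Lemma17.13} to rewrite the $\chi^2$-distance of the time-inhomogeneous walk from $\pi$ purely in terms of the Doob-transformed evolving set process, and then recognize the result as $\estarth{Z_n^2}{x,\eta}$, finishing with Cauchy--Schwarz to pass from $Z_n^2$ to $Z_n$. Here $Y_n$ should be the walk corresponding to the fixed environment $\eta$ (i.e.\ $\prstart{Y_n=\cdot}{x,\eta}=\prstart{X_n=\cdot}{x,\eta}$), and the evolving set process $(S_k)$ is the one associated with this inhomogeneous chain started from $S_0=\{x\}$.

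First I would expand the definition of $\chi^2$: using the second form, $\chi^2(\prstart{Y_n=\cdot}{x,\eta},\pi) = \sum_y \frac{\prstart{Y_n=y}{x,\eta}^2}{\pi(y)} - 1$. Next I would substitute the identity from Lemma~\ref{lem:Lemma17.12}, namely $\prstart{Y_n=y}{x,\eta} = \frac{\pi(y)}{\pi(x)}\prstart{y\in S_n}{\{x\}}$, which gives $\chi^2 = \frac{1}{\pi(x)^2}\sum_y \pi(y)\,\prstart{y\in S_n}{\{x\}}^2 - 1$. To handle the square of a probability I would introduce an independent copy: $\prstart{y\in S_n}{\{x\}}^2$ is the probability that $y$ lies in both of two independent evolving-set chains, but cleaner is to write $\sum_y \pi(y)\prstart{y\in S_n}{\{x\}}^2 = \estart{\sum_{y\in S_n}\pi(y)\,\prstart{y\in S_n}{\{x\}}}{\{x\}}$ via the tower property, and then one more application of Lemma~\ref{lem:Lemma17.12}-type reasoning (or directly $\prstart{y\in S_n}{\{x\}} = \frac{\pi(x)}{\pi(y)}\prstart{Y_n=y}{x}$) converts the inner sum; alternatively, and most directly, I would pass to the Doob transform. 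Under $\widehat{\mathbb{P}}$, for any functional $F$ of $S_n$ one has $\estart{F(S_n)}{\{x\}} = \frac{\pi(x)}{1}\,\cdot$ (appropriate reweighting), so that $\sum_y \pi(y)\prstart{y\in S_n}{\{x\}}^2 = \pi(x)\,\estarth{\pi(S_n)}{\{x\},\eta}$ is the wrong power — I should instead track that $\sum_{y}\pi(y)\prstart{y\in S_n}{\{x\}}^2 = \pi(x)\sum_y \prstart{Y_n=y}{x}\prstart{y\in S_n}{\{x\}}$, and since under the Doob transform $\estarth{G(S_n)}{\{x\}} = \frac{1}{\pi(x)}\estart{\pi(S_n)G(S_n)}{\{x\}}$, taking $G(S_n) = \frac{\1(\,\cdot\,)}{\pi(S_n)}$ style bookkeeping yields $\chi^2 + 1 = \estarth{\pi(S^{\#}_n)/\pi(S_n)^2 \cdot (\text{correction})}{\{x\},\eta}$. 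The clean statement I expect to land on is $\chi^2(\prstart{Y_n=\cdot}{x,\eta},\pi)+1 = \estarth{\pi(S_n^{\#})/\pi(S_n)^2}{\{x\},\eta} = \estarth{Z_n^2}{\{x\},\eta}$ up to the additive constant being absorbed (the $S^{\#}$ rather than $S$ appears because the identity is symmetric under $S_n\leftrightarrow S_n^c$ and one picks the smaller-mass representative), whence $\chi = \sqrt{\chi^2} \le \sqrt{\estarth{Z_n^2}{}} $ and by Jensen $\sqrt{\estarth{Z_n^2}{}} \ge \estarth{Z_n}{}$ — but the inequality in the lemma goes the other way, so in fact I expect the correct chain to be $\chi^2 \le \estarth{Z_n^2}{} - 1$ combined with an elementary bound $\estarth{Z_n^2}{} - 1 \le \estarth{Z_n}{}^2$ or similar, or the lemma genuinely asserts $\chi \le \estarth{Z_n}{}$ via $\chi^2 \le \estarth{Z_n}{}^2$ directly from a sharper evolving-set identity.

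The main obstacle, and the step I would be most careful about, is getting the bookkeeping of the Doob transform exactly right so that the power of $\pi(S_n)$ comes out as $Z_n^2 = \pi(S_n^{\#})/\pi(S_n)^2$ and not something off by a factor of $\pi(S_n)$; this is precisely the computation done in \cite[Section 17]{LevPerWil} for the homogeneous case (the identity $4\|\prstart{Y_n=\cdot}{x}-\pi\|_{\rm TV}^2 \le \chi^2 = \estarth{Z_n^2}{}-1$ style statement), and the point is that the inhomogeneous version goes through verbatim once Lemmas~\ref{lem:Lemma17.12} and~\ref{lem:Lemma17.13} are in hand. So the real content is: quote those two lemmas, repeat the homogeneous $\chi^2$-versus-evolving-set computation line by line, note that no step used homogeneity, and conclude. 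I would therefore present the proof as ``this is \cite[proof of Theorem~17.?]{LevPerWil} verbatim, using Lemmas~\ref{lem:Lemma17.12} and~\ref{lem:Lemma17.13} in place of their homogeneous counterparts,'' expanding the one-line $\chi^2$ computation explicitly for the reader's convenience.
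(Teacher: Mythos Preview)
Your final sentence is exactly what the paper does: it simply cites \cite[equation (24)]{MorrisPeres} and says the proof goes through verbatim using Lemmas~\ref{lem:Lemma17.12} and~\ref{lem:Lemma17.13}. So at the level of ``what to write'', you have landed in the right place.

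However, your attempted unpacking of the computation contains a real gap, and since you flag it yourself (``the inequality in the lemma goes the other way'') it is worth naming. The route via $\chi^2 = \estarth{Z_n^2}{}-1$ followed by Jensen or Cauchy--Schwarz cannot work: $\estarth{Z_n^2}{}\ge \estarth{Z_n}{}^2$ is the wrong direction, and the ``elementary bound'' $\estarth{Z_n^2}{}-1\le \estarth{Z_n}{}^2$ you hope for is simply false in general. The correct argument never squares. Writing $\Lambda_n(y):=\1(y\in S_n)/\pi(S_n)$, Lemma~\ref{lem:Lemma17.12} together with the Doob-transform relation $\estarth{F(S_n)}{\{x\}}=\pi(x)^{-1}\estart{\pi(S_n)F(S_n)}{\{x\}}$ gives the pointwise identity
\[
\frac{\prstart{Y_n=y}{x,\eta}}{\pi(y)}-1 \;=\; \estarth{\Lambda_n(y)-1}{\{x\},\eta}.
\]
Now take the $L^2(\pi)$ norm in $y$ and apply Minkowski's inequality (triangle inequality for the norm of an average) to pull the norm inside the expectation:
\[
\chi \;=\; \bigl\|\estarth{\Lambda_n-1}{}\bigr\|_{2,\pi} \;\le\; \estarth{\|\Lambda_n-1\|_{2,\pi}}{}.
\]
A direct computation gives $\|\Lambda_n-1\|_{2,\pi}^2=\pi(S_n^c)/\pi(S_n)$, and one checks in the two cases $\pi(S_n)\le 1/2$ and $\pi(S_n)>1/2$ that $\sqrt{\pi(S_n^c)/\pi(S_n)}\le \sqrt{\pi(S_n^{\#})}/\pi(S_n)=Z_n$. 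This is exactly the Morris--Peres computation; the point is that Minkowski, not Jensen on squares, is the inequality that makes the direction come out right.
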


\begin{proof}[\bf Proof]

In the homogeneous case this is~\cite[equation (24)]{MorrisPeres}. The proof for the inhomogeneous case goes through verbatum using Lemmas~\ref{lem:Lemma17.12} and~\ref{lem:Lemma17.13}.
\end{proof}

\begin{proof}[\bf Proof of Theorem~\ref{thm:QuenchedMixingTimeGeneral}]

By Markov's inequality we obtain
\begin{align*}
\mathcal{P}_{\zeta}\left(\eta:\, \chi(\prstart{Y_n=\cdot}{x,\eta}, \pi)\geq \epsilon^{1/4}\right) \leq \epsilon^{-1/4} \mathbb{E}_{\zeta}\left[\chi(\prstart{Y_n=\cdot}{x,\eta}, \pi)\right],
\end{align*}
where the last expectation is taken over the environment $\eta$ started from $\zeta$. From Lemma~\ref{lem:l2} we can upper bound the right hand side by 
$\epsilon^{-1/4} \estarth{Z_n}{\{x\},\zeta}$. From Lemma~\ref{lem:zprocess} we get that $\estarth{Z_n}{\{x\},\zeta}\leq \sqrt{\epsilon}$ for all 
\[
n\geq 1+\int_{4\pi(x)}^{4/\epsilon} \frac{du}{u\psi(u)}.
\]
Lemma 10 in \cite{MorrisPeres} implies that for all $p$ and $S$,
$$
\psi_{p}(S)\geq \frac{\gamma^2}{2(1-\gamma)^2}\varphi^2_{p}(S).
$$
Therefore, taking expectations and using Jensen's inequality we get for all $\zeta$ and $S$
$$
\psi(\zeta,S)\geq \frac{\gamma^2}{2(1-\gamma)^2}\varphi^2(\zeta,S),
$$
and hence for all $r$
\[
\psi(r)\geq \frac{\gamma^2}{2(1-\gamma)^2}\phi^2(r).
\]
Thus this gives that for all 
\[
n\geq 1+ \frac{2(1-\gamma)^2}{\gamma^2}\int_{4\pi(x)}^{4/\epsilon} \frac{du}{u\phi^2(u)} 
\]
we have $\estarth{Z_n}{\{x\},\zeta}\leq \sqrt{\epsilon}$ and hence this completes the proof.
\end{proof}

\section*{Acknowledgements}

We thank Microsoft Research for its hospitality where parts of this work were completed. The third author also acknowledges the support of the Swedish Research Council and the Knut and Alice Wallenberg Foundation.

\bibliographystyle{plain}
\bibliography{biblio}

\end{document}